\renewcommand{\widehat}{\hat}
\newtheorem{theorem}{Theorem}[section]
\newtheorem{lemma}[theorem]{Lemma}
\newtheorem{conjecture}[theorem]{Conjecture}
\newtheorem{question}[theorem]{Question}
\newtheorem{corollary}[theorem]{Corollary} 
\theoremstyle{definition}
\newtheorem{definition}[theorem]{Definition}
\theoremstyle{remark}
\numberwithin{equation}{theorem}
\renewcommand{\phi}{\varphi}
\newcommand{\E}{\mathcal{E}}
\newcommand{\D}{\mathcal{D}}
\newcommand{\Ahat}{\widehat{A}}
  \newcommand
{\wock}{\omega_1^{\textup{CK}}}
\title[The Computably Enumerable Sets]{Some recent research directions
  in the computably enumerable sets}
\author[P.\ Cholak]{Peter~A.~Cholak}
\address{Department of Mathematics\\ University of Notre Dame\\ 
  Notre Dame, IN 46556-5683}
\email{Peter.Cholak.1@nd.edu}
\urladdr{http://www.nd.edu/~cholak}
\subjclass[2000]{Primary 03D25}
\begin{document}

 \begin{abstract}
   As suggested by the title, this paper is a survey of recent results
   and questions on the collection of computably enumerable sets under
   inclusion.  This is not a broad survey but one focused on the
   author's and a few others' current research.
 \end{abstract}
 \maketitle

 There are many equivalent ways to definite a computably enumerable or
 c.e.\ set.  The one that we prefer is the domain of a Turing machine
 or the set of balls accepted by a Turing machine. Perhaps this
 definition is the main reason that this paper is included in this
 volume and the corresponding talk in the ``Incomputable'' conference.
 The c.e.\ sets are also the sets which are $\Sigma^0_1$ definable in
 arithmetic.

 There is a computable or effective listing, $\{ M_e | e \in \omega\}$,
 of all Turing machines.  This gives us a listing of all c.e.\ sets,
 $x$ in $W_e$ at stage $s$ iff $M_e$ with input $x$ accepts by stage
 $s$.  This enumeration of all c.e.\ sets is very dynamic. We can
 think of balls $x$ as flowing from one c.e.\ set into another.  Since
 they are sets, we can partially order them by inclusion, $\subseteq$
 and consider them as model, $\mathcal{E} = \langle \{ W_e | e \in
 \omega\}, \subseteq\rangle$.  All sets (not just c.e.\ sets) are
 partially ordered by Turing reducibility, where $A \leq_T B$ iff
 there is a Turing machine that can compute $A$ given an oracle for
 $B$.

 Broadly, our goal is to study the structure $\mathcal{E}$ and learn
 what we can about the interactions between definability (in the
 language of inclusion $\subseteq$), the dynamic properties of c.e.\
 sets and their Turing degrees.  A very rich relationship between
 these three notions has been discovered over the years.  We cannot
 hope to completely cover this history in this short paper. But, we
 hope that we will cover enough of it to show the reader that the
 interplay between these three notions on c.e.\ sets is, and will
 continue to be, an very interesting subject of research.

 We are assuming that the reader has a background in computability
 theory as found in the first few chapters of \citet{Soare:87}.  All
 unknown notation also follows \cite{Soare:87}.

 \section{Friedberg Splits}

 The first result in this vein was \citet{Friedberg:58}, every
 noncomputable c.e.\ set has a Friedberg split.  Let us first
 understand the result then explore why we feel this result relates to
 the interplay of definability, Turing degrees and dynamic properties
 of c.e.\ sets.

\begin{definition}
  $A_0 \sqcup A_1 = A$ is a \emph{Friedberg split} of $A$ iff, for all
  $W$ (all sets in this paper are always c.e.), if $W-A$ is not a
  c.e.\ set neither are $W-A_i$.
\end{definition}

The following definition depends on the chosen enumeration of all
c.e. sets.  We use the enumeration given to us in the second paragraph
of this paper, $x \in W_{e,s}$ iff $M_e$ with input $x$ accepts by
stage $s$, but with the convention that if $x \in W_{e,s}$ then $e,x <
s$ and, for all stages $s$, there is at most one pair $e, x$ where $x$
enters $W_e$ at stage $s$.  Some details on how we can effectively
achieve this type of enumeration can be found in \citet[Exercise
I.3.11]{Soare:87}.  Moreover, when given a c.e.\ set, we are given the
index of this c.e.\ set in terms of our enumeration of all c.e.\
sets. At times we will have to appeal to Kleene's Recursion Theorem to
get this index.

\begin{definition}
  For c.e.\ sets $A = W_e$ and $B=W_i$, $$A \backslash B = \{ x |
  \exists s [x \in (W_{e,s}-W_{i,s})]\}$$ and $A\searrow B = A
  \backslash B \cap B$.
\end{definition}

By the above definition, $A\backslash B$ is a c.e.\ set.  $A
\backslash B$ is the set of balls that enter $A$ before they enter
$B$.  If $x \in A \backslash B$ then $x$ may or may not enter $B$ and
if $x$ does enters $B$, it only does so after $x$ enters $A$ (in terms
of our enumeration). Since the intersection of two c.e.\ sets is c.e,
$A \searrow B$ is a c.e.\ set.  $A\searrow B $ is the c.e.\ set of
balls $x$ that first enter $A$ and then enter $B$ (under the above
enumeration).

Note that $W\backslash A = (W-A) \sqcup (W\searrow A)$ ($\sqcup$ is
the disjoint union). Since $W\backslash A$ is a c.e.\ set, if $W-A$
is not a c.e.\ set then $W\searrow A$ must be infinite.  (This happens
for all enumerations.)  Hence infinitely many balls from $W$ must flow
into $A$.

\begin{lemma}[Friedberg]
  Assume $A= A_0 \sqcup A_1$, and, for all $e$, if $W_e \searrow A$ is
  infinite then both $W_e \searrow A_0$ and $W_e \searrow A_1$ are
  infinite.  Then $A_0 \sqcup A_1$ is a Friedberg split of $A$.
  Moreover if $A$ is not computable neither are $A_0$ and $A_1$.
\end{lemma}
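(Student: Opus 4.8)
The plan is to verify the two conclusions separately. For the splitting property, fix a c.e.\ set $W$ and suppose $W - A$ is not c.e.; I want to show $W - A_0$ is not c.e.\ (the case of $A_1$ is symmetric). The key algebraic observation, already noted in the excerpt, is the decomposition $W \backslash A = (W - A) \sqcup (W \searrow A)$, and more to the point its relativization to $A_0$: since $A = A_0 \sqcup A_1$, a ball counted in $W \backslash A_0$ either lands outside $A$, or lands in $A_0$, or first enters $A_0$ and later moves — but balls do not leave c.e.\ sets, so the relevant split is $W \backslash A_0 = (W - A) \sqcup (W \searrow A_0)$, using that $A_0 \subseteq A$ so $x \notin A_0$ with $x \in W$ and $x$ having entered $W$ before $A_0$ means either $x \notin A$ or $x \in A_1 \setminus A_0$; one checks $x \in A_1$ forces $x \in W \searrow A_1$, which I fold in. The cleanest route is: $W \backslash A$ is c.e., hence by hypothesis (contrapositive of the excerpt's remark) $W \searrow A$ is infinite; then the hypothesis of the lemma gives that $W \searrow A_0$ is infinite. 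Now I argue that $W - A_0$ cannot be c.e.: if it were, then since $W - A = (W - A_0) \cap (W - A_1)$ and symmetrically $W - A_1$ would have to be analyzed, but the real point is $W - A = (W - A_0) - A_1$ — wait, $A_1 \subseteq A$, so $(W - A_0) - A_1$ removes from $W$ everything in $A_0$ and everything in $A_1$, i.e.\ everything in $A$, giving exactly $W - A$; and $W - A_0$ c.e.\ together with $A_1$ c.e.\ does not immediately give $W-A$ c.e. So instead I use: $W - A_0 = (W - A) \sqcup (W \searrow A_0 \cap (W - A_0))$? That is empty. The correct identity is $(W-A_0) = (W-A)\sqcup(A_1 \cap W \cap \overline{A_0})$, and $A_1 \cap W \cap \overline{A_0} = (A_1 \cap W)$ since $A_0, A_1$ are disjoint; so $W - A_0 = (W-A) \sqcup (W \cap A_1)$, and $W \cap A_1$ is c.e. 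Hence if $W - A_0$ were c.e., then $W - A = (W - A_0) - (W \cap A_1)$, a difference of two c.e.\ sets, which need not be c.e. — so this direction needs the dynamic hypothesis, not just algebra.

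Here is the fix, and it is where the dynamic hypothesis earns its keep. Suppose toward a contradiction that $W - A_0 = W_j$ for some $j$. Consider $W \searrow A_1$: I claim $W \searrow A_1 \subseteq W_j \searrow A_1$ up to finitely much, because a ball $x$ that enters $W$ then enters $A_1$ — since $x \in A_1$ means $x \notin A_0$, so (at least from the stage $x$ enters $A_1$ onward, and $x$ never leaves) $x \in W - A_0 = W_j$ — enters $W_j$ as well, and does so compatibly with the $\searrow$ timing. Then $W_j \searrow A_1$ is infinite, so by the lemma's hypothesis applied to the index $j$, $W_j \searrow A_0$ is infinite; but $W_j = W - A_0$ is disjoint from $A_0$, so no ball of $W_j$ ever enters $A_0$, forcing $W_j \searrow A_0 = \emptyset$ — contradiction. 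This also silently requires that $W \searrow A_1$ is infinite; that follows because $W \searrow A = (W\searrow A_0)\sqcup(W \searrow A_1)$ is the disjoint decomposition of which balls-first-in-$W$-later-in-$A$ land in which half, and I already argued $W\searrow A$ is infinite, so at least one of the two halves is infinite; if it is the $A_0$ half I run the symmetric argument with $A_0$ and $A_1$ interchanged, targeting $W - A_1$ instead. So the genuine content is this ping-pong: c.e.-ness of one half would make its complement-in-$W$ absorb the cross-flow into the \emph{other} half, which the hypothesis then illegitimately forces back into the first half.

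For the ``moreover'' clause, suppose $A$ is noncomputable but $A_0$ is computable; I derive a contradiction. Apply the splitting property just proved with $W = \overline{A_0}$? That is not c.e.\ in general, so instead I use $W = \overline{A_0} \cap (\text{something c.e.})$ — better: recall the standard fact that a c.e.\ set $B$ is computable iff $\overline{B}$ is c.e., so if $A_0$ is computable then $\overline{A_0}$ is c.e., and I may take $W = \overline{A_0}$ legitimately. Then $W - A_0 = \overline{A_0} - A_0 = \overline{A_0}$, which is c.e.; but also $W - A = \overline{A_0} - A = \overline{A_0} - A_1 = \overline{A}$ (since $\overline{A_0} = A_1 \sqcup \overline{A}$). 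If $\overline{A}$ were not c.e., the splitting property (already proved) would say $W - A_0$ is not c.e., contradicting that it equals $\overline{A_0}$, which is c.e. Hence $\overline{A}$ is c.e., so $A$ is computable, contradicting the assumption; therefore $A_0$ (and symmetrically $A_1$) is noncomputable.

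The main obstacle I anticipate is getting the first part's argument genuinely correct rather than merely plausible: the naive ``complements of c.e.\ sets'' algebra does not close, and one must locate exactly where the dynamic hypothesis ``$W_e \searrow A$ infinite implies both $W_e \searrow A_i$ infinite'' is indispensable — namely in the ping-pong step where an assumed index $j$ for $W - A_0$ is fed back into the hypothesis to manufacture flow into $A_0$ that cannot exist. Care is also needed with the ``up to finitely much'' clauses coming from the convention $e, x < s$ in the enumeration, and with the case split on which half of $W \searrow A$ is infinite, but these are routine once the core contradiction is isolated.
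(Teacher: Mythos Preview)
Your overall architecture is right: assume $X := W - A_0$ is c.e., feed its index back into the hypothesis, and derive $X \searrow A_0$ infinite, which contradicts $X \cap A_0 = \emptyset$. The ``moreover'' clause is also fine and matches the paper.

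But the specific route you take to show $X \searrow A$ (or $X \searrow A_1$) is infinite has a real gap. You claim $W \searrow A_1 \subseteq W_j \searrow A_1$ ``up to finitely much,'' arguing that a ball $x$ entering $W$ then $A_1$ lies in $W - A_0 = W_j$ and ``does so compatibly with the $\searrow$ timing.'' That last phrase is the problem: all you know is that $x$ belongs to the \emph{set} $W_j$; you have no control over \emph{when} $x$ appears in the particular enumeration indexed by $j$. Nothing stops infinitely many such $x$ from entering $A_1$ long before they show up in $W_{j,s}$, so the inclusion can fail on an infinite set. The ``up to finitely much'' disclaimer does not rescue this.

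The paper's fix is a one-line observation you overlooked: since $A_0 \subseteq A$, we have
\[
X - A \;=\; (W - A_0) - A \;=\; W - A,
\]
which by assumption is not c.e. Hence $X \searrow A$ is infinite \emph{directly}, with no detour through $W \searrow A_1$ and no dependence on how $X$ happens to be enumerated. Now the hypothesis applied to the index of $X$ gives $X \searrow A_0$ infinite, contradicting $X \cap A_0 = \emptyset$. This also eliminates your case split on which half of $W \searrow A$ is infinite; the argument is symmetric and uniform.
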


\begin{proof}
  Assume that $W-A$ is not a c.e.\ set but $X= W-A_0$ is a c.e.\ set.
  $X-A = W-A$ is not a c.e.\ set.  So $X\searrow A$ is infinite and
  therefore $X \searrow A_0$ is infinite.  Contradiction.

  If $A_0$ is computable then $X= \overline{A}_0$ is a c.e.\ set and
  if $A$ is not computable then $X-A$ cannot be a c.e.\ set.  So use
  the same reasoning as above to show $X \searrow A_0$ is infinite for
  a contradiction.
\end{proof}

Friedberg more or less invented the priority method to split every
c.e.\ set into two disjoint c.e.\ sets while meeting the hypothesis of
the above lemma.  The main idea of Friedberg's construction is when a
ball $x$ enters $A$ at stage $s$ to add it to one of $A_0$ or $A_1$
but which set $x$ enters is determined by priority.  Let
\begin{equation}\tag*{$\mathcal{P}_{e,i,k}$}
  \label{eq:1}
  \text{if }W_e \searrow A \text{ is infinite then } |W_e \searrow A_i
  | \geq k. 
\end{equation}
We say $x$ meets $\mathcal{P}_{e,i,k}$ at stage $s$ if $|W_e \searrow
A_i | < k$ by stage $s-1$ and if we add $x$ to $A_i$ then $|W_e
\searrow A_i | \geq k$ by stage $s$.  Find the highest $\langle e, i,
k \rangle$ that $x$ can meet and add $x$ to $A_i$ at stage $s$.  It is
not hard to show that all the $\mathcal{P}_{e,i,k}$ are meet.

It is clear that the existence of a Friedberg split is very dynamic.
Let's see why it is also a definable property.  But, first, we need to
understand what we can say about $\mathcal{E}$ with inclusion.  We are
not going to go through the details but we can define union,
intersection, disjoint union, the empty set and the whole set.  We can
say that a set is complemented.  A very early result shows that if $A$
and $\overline{A}$ are both c.e.\ then $A$ is computable. So it is
definable if a c.e.\ set is
computable.  
Inside every computable set we can repeat the construction of the
halting set.  So a c.e.\ set $X$ is finite iff every subset of $X$ is
computable.  Hence $W-A$ is a c.e.\ set iff there is a c.e. set $X$
disjoint from $A$ such that $W \cup A = X \sqcup A$.  So saying that
$A_0 \sqcup A_1 =A$ is a Friedberg split and $A$ is not computable is
definable.

Friedberg's result answers a question of Myhill, ``Is every
non-recursive, recursively enumerable set the union of two disjoint
non-recursive,  recursively enumerable sets?''  The question of Myhill
was asked in print in the Journal of Symbolic Logic in June 1956,
Volume 21, Number 2 on page 215 in the ``Problems'' section of the
JSL.  This question was the eighth problem appearing in this section.
The question about the existence of maximal sets, also answered by
Friedberg, was ninth.  This author does not know how many questions
were asked or when this section was dropped. Myhill also reviewed
\citet{Friedberg:58} for the AMS, but the review left no clues why he
asked the question in the first place.

The big question in computability theory in the 1950's was ``Does
there exist an incomplete noncomputable c.e. set''?
\citet{Kleene.Post:54} showed that there are a pair of incomparable
Turing degrees below $\bf{0'}$.  We feel that after Kleene-Post,
Myhill's question is very natural.  So we can claim that the existence
of a Friedberg split for every c.e.\ set $A$ fits into our theme, the
interplay of definability, dynamic properties and Turing degree on the
c.e.\ sets.

\subsubsection{Recent Work and Questions on Friedberg Splits}

Given a c.e.\ set one can uniformly find a Friedberg split.  It is
known that there are other types of splits.  One wonders if any of
these non-Friedberg splits can be done uniformly.  It is also known
that for some c.e.\ sets the only nontrivial splits ($A = A_0 \sqcup
A_1$ and the $A_0$ and $A_1$ are not computable) are Friedberg. So one
cannot hope to get a uniform procedure which always provides a
nontrivial non-Friedberg split of every noncomputable c.e.\ set.  But
it would be nice to find a computable function $f(e) = \langle e_0,
e_1 \rangle$ such that, for all $e$, if $W_e$ is noncomputable then
$W_{e_0} \sqcup W_{e_1} = W_e$ is a nontrivial split of $W_e$ and, for
every c.e.\ set $A$, if $A$ has a nontrivial non-Friedberg split and
$A = W_e$ (so $W_e$ is any enumeration of $A$), and then $W_{e_0}
\sqcup W_{e_1} = W_e$ is a nontrivial non-Friedberg split.  So, if $A$
has a nontrivial non-Friedberg split and $W_e$ is any enumeration of
$A$, $f$ always gives out a nontrivial non-Friedberg split. In work
yet to appear, the author has shown that such a computable $f$ cannot
exist.

Let $\mathcal{P}$ be a property in $\mathcal{E}$.  We say that $A$ is
\emph{hemi}-$\mathcal{P}$ iff there are c.e.\ sets $B$ and $C$ such
that $A \sqcup B = C$ and $C$ has $\mathcal{P}$.  We can also define
\emph{Friedberg}-$\mathcal{P}$ iff there are c.e.\ sets $B$ and $C$
such that $A \sqcup B = C$ is a Friedberg split and $C$ has
$\mathcal{P}$. If $\mathcal{P}$ is definable then
\emph{hemi}-$\mathcal{P}$ and \emph{Friedberg}-$\mathcal{P}$ are also
definable.  One can get lots of mileage from the
\emph{hemi}-$\mathcal{P}$, see \citet{Downey.Stob:92} and
\citet{Downey.Stob:93}.  Most of these results are about properties
$\mathcal{P}$ where every nontrivial split of a set with $\mathcal{P}$
is Friedberg.  We feel that one should be using
\emph{Friedberg}-$\mathcal{P}$ rather than \emph{hemi}-$\mathcal{P}$.
To that end we ask the following:

 \begin{question}
   Is there a definable $\mathcal{P}$ such that the Friedberg splits
   are a proper subclass of the nontrivial splits?
 \end{question}

 We feel that the Friedberg splits are very special and they should
 not be able to always cover all the nontrivial splits of every
 definable property.


 \section{All orbits nice? No!}

 As we mentioned earlier, Friedberg also constructed a maximal set
 answering another question of Myhill.  A maximal set, $M$, is a c.e.\
 set such that for every superset $X$ either $X=^* M$ ($=^*$ is equal
 modulo finite) or $W =^* \omega$. Being maximal is definable.
 Friedberg's construction of a maximal set is very dynamic.
 \citet{Martin:66*1} showed that all maximal sets must be high.  A
 further result of \citet{Martin:66*1} shows a c.e.\ degree is high
 iff it contains a maximal set.  A remarkable result of Soare
 \cite{Soare:74} shows that the maximal sets form an orbit, even an
 orbit under automorphisms computable from $\bf 0''$ or
 $\Delta^0_3$-automorphisms.

 The result of Soare gives rise to the question are all orbits as nice
 as the orbit of the maximal sets?  We can go more into the formality
 of the question but that was dealt with already in another survey
 paper, \citet*{MR2395047}.  To tell if two c.e. sets, $A$ and $B$,
 are in the same orbit, it is enough to show if there is an
 automorphism $\Phi$ of $\mathcal{E}$ taking the one to the other,
 $\Phi(A) = B$ (we write this as $A$ is \emph{automorphic} to $B$).
 Hence it is $\Sigma^1_1$ to tell if two sets are in the same orbit.
 The following theorem says that is the best that we can do and hence
 not all orbits are as nice as the orbits of maximal sets.  The
 theorem has a number of interesting corollaries.

 \begin{theorem}[\citet*{MR2425182}]
   \label{sw} There is a c.e.\ set $A$ such that the index set $\{i :
   W_i \approx A\}$ is $\Sigma^1_1$-complete. \end{theorem}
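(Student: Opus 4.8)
The plan is to establish $\Sigma^1_1$-completeness in the two standard steps. The $\Sigma^1_1$ upper bound has already been observed: ``$W_i \approx A$'' asserts the existence of an automorphism of $\mathcal{E}$ sending $W_i$ to $A$, which is a $\Sigma^1_1$ statement, uniformly in $i$; so $\{i : W_i \approx A\}$ is $\Sigma^1_1$ for every c.e.\ set $A$, and the entire task is to build one particular $A$ for which this index set is also $\Sigma^1_1$-hard. I would fix the $\Sigma^1_1$-complete set $S=\{e : T_e \text{ has an infinite branch}\}$, where $(T_e)_{e\in\omega}$ is an effective list of the computable subtrees of $\omega^{<\omega}$ (ill-foundedness of a computable tree being the prototypical $\Sigma^1_1$-complete set), and aim to build, uniformly in $e$ and using Kleene's Recursion Theorem to secure an index for the set under construction, a c.e.\ set $B_e$ together with a single c.e.\ set $A$ such that $e \in S \iff B_e \approx A$. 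Then $e \mapsto (\text{an index for }B_e)$ is a computable reduction of $S$ to $\{i : W_i \approx A\}$, which is exactly what is required.

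The core idea is to shape the orbit of $A$ so that an automorphism carrying $B_e$ onto $A$ can be assembled by a transfinite back-and-forth whose successive stages are indexed by the nodes of $T_e$. I would build $A$, and each $B_e$, equipped with a layered family of ``gadgets'' --- configurations of c.e.\ subsets, splits, and their $\searrow$-relationships that are definable from parameters in the language of inclusion and are maintained by a priority construction --- with one layer for every node $\sigma \in T_e$. A partial correspondence between the gadgets of $B_e$ and those of $A$ that is ``correct up through level $\sigma$'' should be extendable one further level exactly when $\sigma$ has an immediate successor in $T_e$; hence the correspondence becomes total precisely when the levels can be climbed forever, i.e.\ exactly when $T_e$ has an infinite branch. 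To turn a total correspondence into a genuine automorphism of $\mathcal{E}$ I would invoke the Extension Theorem, in the refined form due to Cholak and Harrington: a correspondence that respects $e$-states and is ``full'' extends to an automorphism, and the gadgets would be designed so that fullness at each level is precisely what a successor node in $T_e$ provides.

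For the converse --- if $T_e$ is well-founded then $B_e \not\approx A$ --- I would argue by transfinite induction on the rank of $T_e$. Since the gadget structure is built from $\mathcal{E}$-definable data, any automorphism $\Phi$ with $\Phi(B_e)=A$ would transport the level-$\sigma$ gadget of $B_e$ to a matching configuration inside $A$; descending along a branch would then force an infinite strictly decreasing sequence of ranks, a contradiction. The point needing real care is to verify that the distinguishing feature of the ``top'' gadget of $A$ --- roughly, that it sits atop an \emph{infinite} descending tower of gadgets --- is honestly invariant under automorphisms, so that its absence in $B_e$ when $T_e$ is well-founded genuinely obstructs automorphy.

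The hardest part, I expect, is the simultaneous bookkeeping. A single $A$ must have an orbit rich enough to absorb $B_e$ for \emph{every} $e\in S$ --- in effect the orbit of $A$ has to be ``universal'' for a whole $\Sigma^1_1$-indexed family of orbit types --- while the invariants separating $A$ from $B_e$ for $e\notin S$ must be truly automorphism-invariant and not satisfied by accident. Carrying this out means interleaving the $\Sigma^1_1$-level coding (whose true path is itself computed only from a well-foundedness oracle, so the underlying priority construction has unbounded, non-$\omega$ injury) with the full apparatus of Soare-style automorphism constructions: maintaining correct $e$-states, managing the ``dump'', and keeping the hypotheses of the Extension Theorem satisfied at every level and for every $e$ at once. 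Reconciling the full-approximation automorphism machinery with this high-complexity coding is the technical heart of the argument; the remaining steps --- checking that $e\mapsto B_e$ is computable and that the two implications hold --- are then comparatively routine.
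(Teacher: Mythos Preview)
There is no proof in this paper to compare against. The paper is a survey; Theorem~\ref{sw} is stated with a citation to \citet*{MR2425182} and is not proved here. Indeed, immediately after listing the corollaries the author writes ``These results were completely explored in the survey, \cite{MR2395047}.\ So we will focus on some more recent work.'' So your proposal cannot be checked against anything in the present paper.

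That said, your outline is broadly in the spirit of the actual Cholak--Downey--Harrington argument: reduce from the $\Sigma^1_1$-complete set of indices of ill-founded computable trees, build a uniform family $(B_e)_e$ together with a target $A$, and arrange that an automorphism $B_e \mapsto A$ exists iff $T_e$ has an infinite path, with the ``yes'' direction handled by an extension-theorem style construction and the ``no'' direction by an invariant that is preserved under automorphisms. Two cautions. First, the real proof does not literally index gadgets by nodes of $T_e$ and run a back-and-forth along a branch; rather, the tree is coded into the isomorphism type of a certain definable quotient structure attached to $A$ (closer to the $\D$-hhsimple machinery discussed later in the survey), and the invariant on the ``no'' side is a genuine $\E$-invariant of that structure, not a rank argument on $T_e$ per se. Second, the construction is not a single priority tree with ``unbounded, non-$\omega$ injury'' in the usual sense; the complexity shows up in the isomorphism type being coded, while the enumeration of $A$ and the $B_e$ remains an (intricate) effective construction. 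If you want to compare in detail, you will need to consult \cite{MR2425182} or the survey \cite{MR2395047}; the present paper gives you nothing further.
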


\begin{corollary}[\citet{MR2425182}]
  Not all orbits are elementarily definable; there is no arithmetic
  description of all orbits of $\E$. \end{corollary}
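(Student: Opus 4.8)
The plan is to reduce $\Sigma^1_1$-completeness to a uniform coding of a canonical $\Sigma^1_1$-complete set into the automorphism-equivalence class of one fixed set. Fix a uniformly computable sequence $(T_e)_{e\in\w}$ of subtrees of $\w^{<\w}$ so that $S=\{e: T_e \text{ has an infinite path}\}$ is $\Sigma^1_1$-complete. Since, as noted above, $\{i : W_i \approx A\}$ is automatically $\Sigma^1_1$ for every c.e.\ $A$, it suffices to build a c.e.\ set $A$ together with a computable function $e\mapsto g(e)$ such that each $W_{g(e)}$ is a uniformly given c.e.\ set $C_e$ with
\[
C_e \approx A \iff e \in S .
\]
Then $g$ is the desired many-one reduction witnessing $\Sigma^1_1$-hardness, and the upper bound gives completeness.

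For the direction $e\in S\Rightarrow C_e\approx A$ the engine is a strengthened Extension Theorem for $\E$, in the tradition of Soare's automorphism machinery: to produce an automorphism $\Phi$ with $\Phi(C_e)=A$ it is enough to build, in a $\Delta^0_3$ way, a back-and-forth correspondence between the ``states'' occupied by balls of $C_e$ and of $A$, meeting certain combinatorial covering conditions at every stage. I would arrange the construction of $A$ and of the $C_e$ so that each node $\sigma\in T_e$ carries a finite stage $\Phi_\sigma$ of such a correspondence, with $\Phi_\tau$ extending $\Phi_\sigma$ whenever $\tau\supseteq\sigma$ in $T_e$, and so that along any infinite path $f$ through $T_e$ the union $\bigcup_n \Phi_{f\restriction n}$ satisfies the hypotheses of the Extension Theorem. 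An infinite path through $T_e$ then yields a genuine automorphism sending $C_e$ to $A$.

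The direction $e\notin S\Rightarrow C_e\not\approx A$ is the delicate part and dictates the whole design. Automorphisms of $\E$ are not effectively enumerable, so we cannot diagonalize against them directly; instead we must arrange that $A$ and the $C_e$ are so rigidly structured that the mere existence of an automorphism $\Psi$ with $\Psi(C_e)=A$ forces an infinite path through $T_e$. I would build $A$ with a carefully chosen $\E$-definable (hence automorphism-invariant) ``skeleton'' — a controlled configuration of supersets, subsets, and dynamic flow — and build $C_e$ so that its skeleton is a faithful copy of the branching of $T_e$ up to the level the construction has reached. Any automorphism $\Psi$ must then match these skeletons, and tracing the match level by level extracts a function on $\w$ all of whose initial segments lie in $T_e$, i.e.\ an infinite path; so $T_e$ well-founded forces $C_e\not\approx A$.

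The main obstacle is coordinating these opposing demands inside a single effective construction of $A$ and of the uniformly presented $C_e$'s: the covering conditions of the Extension Theorem must be kept satisfiable exactly along genuine branches of each $T_e$, while the rigid skeleton must be preserved strongly enough that no automorphism can ``cheat'' a dead end of a well-founded $T_e$. This is a priority argument well above the finite-injury level, organized on a tree of strategies with an infinite-injury module attached to each node of each $T_e$, the whole apparatus running simultaneously over all $e$; the bulk of the work is the bookkeeping that keeps the Extension Theorem applicable on the positive side while maintaining the invariants on the negative side. Verifying by a true-path argument that the resulting $\Delta^0_3$ maps are genuine automorphisms, and that the skeleton-matching really produces a path, is the technical heart; this is carried out in \citet*{MR2425182}.
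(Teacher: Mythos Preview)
You have written a proof sketch of Theorem~\ref{sw}, not of the corollary. The corollary is a one-line consequence of Theorem~\ref{sw}, and that one line is precisely what is missing from your proposal: any first-order formula $\varphi(x)$ in the language of $\E$ yields an arithmetic index set $\{i : \E \models \varphi(W_i)\}$ (inclusion of c.e.\ sets is $\Pi^0_2$, so the complexity of this index set is bounded in the arithmetic hierarchy by the quantifier depth of $\varphi$), and likewise any arithmetic description of the relation $W_i \approx W_j$ would make every orbit index set arithmetic. Since the set $A$ supplied by Theorem~\ref{sw} has a $\Sigma^1_1$-complete --- hence non-arithmetic --- orbit index set, its orbit cannot be elementarily definable, and no uniform arithmetic description of all orbits can exist. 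That is the entire content of the corollary.

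Your outline of the construction behind Theorem~\ref{sw} (coding ill-foundedness of trees into the orbit type of uniformly built c.e.\ sets) is in the right general direction, but that is the hard theorem being \emph{cited} here, not the statement you were asked to prove. The paper is a survey; it does not reprove Theorem~\ref{sw}, and it records the corollary as an immediate consequence of it.
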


\begin{corollary}[\citet{MR2425182}]
  The Scott rank of $\E$ is $\wock +1$. \end{corollary}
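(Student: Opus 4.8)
The plan is to prove the two inequalities $\mathrm{SR}(\E)\le\wock+1$ and $\mathrm{SR}(\E)\ge\wock+1$ separately, the first from general facts about the effective Scott analysis of countable structures and the second by feeding in Theorem~\ref{sw} (here $\mathrm{SR}$ denotes Scott rank). For the upper bound I would first check that $\E$ has an arithmetic presentation: in the language of inclusion, ``$W_e\subseteq W_i$'' is $\Pi^0_2$, so on the arithmetic set of canonical (say, least) indices $\E$ has a genuine presentation with $\Delta^0_3$ atomic diagram. Hence $\omega_1^{\E}=\omega_1^{\emptyset''}=\wock$, and since every countable structure $\mathcal A$ has Scott rank at most $\omega_1^{\mathcal A}+1$, this gives $\mathrm{SR}(\E)\le\wock+1$. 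I would then invoke the sharper fact that, for a structure with an arithmetic presentation, the Scott rank is never exactly $\wock$ (Nadel), so it is either a computable ordinal or $\wock+1$.

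For the lower bound it then suffices to rule out the computable ordinals, and here I would use the standard fact that if the Scott rank of an arithmetically presented structure is a computable ordinal, then every one of its orbits is definable by an infinitary formula of computable rank, and hence is $\Delta^1_1$ as a set of indices. But by Theorem~\ref{sw} there is a c.e.\ set $A$ whose orbit $\{i:W_i\approx A\}$ is $\Sigma^1_1$-complete, and so is not $\Delta^1_1$ (indeed not even $\Pi^1_1$). This contradiction leaves $\mathrm{SR}(\E)=\wock+1$ as the only possibility. In brief, the corollary follows from Theorem~\ref{sw} together with the classical effective Scott analysis of countable structures; the same input also gives the preceding corollary, since a $\Sigma^1_1$-complete orbit is in particular not arithmetically definable.

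The genuinely hard content lies entirely in Theorem~\ref{sw}; the derivation above is routine in outline, but the part that needs real care is the underlying infrastructure. One must confirm that $\E$, whose inclusion relation is only $\Pi^0_2$, is effective enough for the general theory --- including the refinement that $\wock$ is not attained --- to apply to it; verify the complexity of the back-and-forth relations on $\E$ at computable levels, so as to justify the $\Delta^1_1$-definability of orbits in the low-rank case; and, since the several standard definitions of Scott rank differ by one or two and disagree on whether $\wock$ can occur as a value, make sure the convention in force is one under which the answer comes out exactly $\wock+1$. (Under a less generous convention one would instead argue directly that a Scott rank of $\wock$ would already force every orbit to be $\Pi^1_1$, once more contradicting Theorem~\ref{sw}.) Once this bookkeeping is settled, the argument goes through as sketched.
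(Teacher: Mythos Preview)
The paper itself gives no proof of this corollary: it is stated and then the reader is referred to the survey \cite{MR2395047} and the original source \cite{MR2425182}.  Your outline is the standard derivation from Theorem~\ref{sw} and is the right shape: an arithmetic presentation of $\E$ gives the Nadel upper bound $\mathrm{SR}(\E)\le\wock+1$, and the $\Sigma^1_1$-complete orbit of Theorem~\ref{sw} rules out any computable Scott rank.

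One correction, though.  Your main line invokes ``the sharper fact that, for a structure with an arithmetic presentation, the Scott rank is never exactly $\wock$ (Nadel).''  That is not Nadel's theorem, and it is not true: there \emph{are} computable structures of Scott rank exactly $\wock$ under the usual conventions (the Harrison order is the standard example).  Nadel gives only the bound $\le\wock+1$.  The step that actually excludes $\mathrm{SR}(\E)\le\wock$ is precisely the argument you tuck into your final parenthetical: if the Scott rank were at most $\wock$, then every automorphism orbit would be the intersection, over computable $\alpha$, of hyperarithmetic $\alpha$-back-and-forth classes, hence $\Pi^1_1$ as an index set; but a $\Sigma^1_1$-complete set is not $\Pi^1_1$.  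So promote that parenthetical to the main argument and drop the misattributed dichotomy.  With that fix, and with your care about $\E$ having only a $\Pi^0_2$ atomic diagram (which is fine, since the effective Scott analysis relativizes and $\omega_1^{\emptyset''}=\wock$), the derivation is correct and is essentially what the source paper does.
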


\begin{theorem}[\citet{MR2425182}]\label{sec:maincor}
  For all finite $\alpha > 8$ there is a properly $\Delta^0_{\alpha}$
  orbit. \end{theorem}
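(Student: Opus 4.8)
The plan is to obtain Theorem~\ref{sec:maincor} as a ``finite‑rank'' version of the construction behind Theorem~\ref{sw}. Recall that the $\Sigma^1_1$‑completeness proof attaches to the c.e.\ set $A$ a uniformly given system of coding locations together with a computable tree, arranged so that ``$W_i \approx A$'' holds exactly when a derived tree (depending computably on $i$) is well‑founded, and so that whenever this holds an automorphism of $\E$ witnessing $\Phi(W_i)=A$ can be produced by the Harrington--Soare full‑approximation method (the New Extension Theorem). The idea here is to truncate this apparatus: instead of coding well‑foundedness of an arbitrary tree, code a fixed \emph{properly} $\Delta^0_{\alpha}$ set $S$ (for instance a canonical complete set in the $\alpha$th finite level of the Ershov difference hierarchy) by building a uniform sequence $A_n$ of c.e.\ sets with $A_n \approx A$ if and only if $n \in S$, using a coding tree of bounded rank on the order of $\alpha$. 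Since $n \mapsto$ (an index for $A_n$) is then an $m$‑reduction of $S$ to $\{i : W_i \approx A\}$, that index set cannot lie in $\Sigma^0_{\beta}$ or $\Pi^0_{\beta}$ for any $\beta < \alpha$, which is the properness half.

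For the upper bound I would show $\{i : W_i \approx A\}\in\Delta^0_{\alpha}$ in two matching halves. First, a $\Sigma^0_{\alpha}$ description: by the Extension Theorem the existence of an automorphism taking $W_i$ to $A$ is equivalent to the existence of a finitely supported correspondence between the ``skeletons'' (the relevant $e$‑state structure / quotient lattices) of $W_i$ and of $A$ that respects the coding, and because the coding tree has rank at most (roughly) $\alpha$, the search for such a correspondence nests only $\alpha$ many alternating quantifiers. Second, the dual $\Pi^0_{\alpha}$ description: either dualize this analysis on the complement, or---cleaner---observe that once the $\Sigma^0_{\alpha}$ condition is met the witnessing automorphism can in fact be constructed $\Delta^0_{\alpha}$‑uniformly, since the full‑approximation construction relativizes and, guided by the finite‑rank coding, its injury bookkeeping settles within $\alpha$ levels. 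Combining the two gives membership in $\Delta^0_{\alpha}$.

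The lower bound is carried by the design of the family $A_n$. When $n\in S$ one must genuinely be able to run the automorphism machinery to get $\Phi_n$ with $\Phi_n(A_n)=A$; when $n\notin S$ one must defeat every automorphism. The blocking is arranged by maintaining an $\E$‑definable invariant---read off from the lattice $\mathcal{L}^{*}(A_n)$ of supersets, essentially a ``rank'' or splitting pattern of the coded locations, of the flavour of the hemi‑$\P$/Friedberg‑$\P$ invariants---that is preserved under automorphisms and that agrees with the corresponding invariant of $A$ precisely when $n\in S$. The constant $8$ in the statement is the fixed number of levels of the hierarchy consumed by the baseline coding apparatus (the part already needed to run the construction at all), above which the rank‑$\alpha$ tunable part can be installed.

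The main obstacle, as usual in this area, is the positive direction: building the automorphisms $\Phi_n$ uniformly while keeping the whole construction at complexity \emph{exactly} $\Delta^0_{\alpha}$---neither leaking below it (which would wreck the upper‑bound bookkeeping) nor needing more. Concretely, one must run the full‑approximation automorphism construction with the finite‑rank coding as a guide, verify all the $e$‑state and extension requirements using only $\Delta^0_{\alpha}$ oracles, and simultaneously ensure the negative‑side invariant is a genuine obstruction that no cleverer automorphism can evade. Getting these two sides to meet tightly---so that the coding is faithful in both directions at precisely level $\alpha$---is the crux, and is exactly what forces the fixed overhead $\alpha>8$.
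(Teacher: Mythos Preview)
The paper under review is a survey and does not contain a proof of this theorem at all: it is stated as a result from \cite{MR2425182} and the reader is immediately referred elsewhere (``These results were completely explored in the survey, \cite{MR2395047}''). So there is no proof in the paper to compare your proposal against.

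That said, a few remarks on your sketch as a standalone outline. Your high-level strategy---take the machinery behind the $\Sigma^1_1$-completeness theorem and cap the rank of the coding tree at a finite level---is indeed the shape of the argument in the source paper \cite{MR2425182}, and your identification of the constant $8$ as fixed overhead from the base construction is on target. However, two points deserve care. First, ``properly $\Delta^0_\alpha$ orbit'' in this context means that any two members of the orbit are $\Delta^0_\alpha$-automorphic but there are members not $\Delta^0_\beta$-automorphic for $\beta<\alpha$; this is about the complexity of the \emph{automorphisms}, not directly the complexity of the index set $\{i:W_i\approx A\}$, so your upper-bound argument should be phrased as producing $\Delta^0_\alpha$ automorphisms rather than merely placing the index set in $\Delta^0_\alpha$. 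Second, your parenthetical example is off: the Ershov difference hierarchy stratifies $\Delta^0_2$, not $\Delta^0_\alpha$, so ``a canonical complete set in the $\alpha$th finite level of the Ershov hierarchy'' is not a properly $\Delta^0_\alpha$ set. You want something like a $\Pi^0_{\alpha-1}$-complete set, or more to the point a family of trees whose well-foundedness question sits at the right arithmetic level.
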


These results were completely explored in the survey,
\cite{MR2395047}.  So we will focus on some more recent work.  In the
work leading to the above theorems, Cholak and Harrington also showed
that:

\begin{theorem}[\cite{MR2366962}]
  Two simple sets are automorphic iff they are $\Delta^0_6$
  automorphic.  A set $A$ is \emph{simple} iff for every (c.e.) set
  $B$ if $A \cap B$ is empty then $B$ is finite.
\end{theorem}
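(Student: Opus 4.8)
The ``if'' direction is trivial, since a $\Delta^0_6$-automorphism is in particular an automorphism; so what must be shown is that any two automorphic simple sets $A$ and $\Ahat$ are automorphic via some $\Delta^0_6$ map. The plan is to run the standard back-and-forth construction of an automorphism of $\E$ relative to the oracle $\mathbf{0}^{(5)}$. Recall the shape of such a construction: $\Phi$ is produced as the limit of finite injective maps, alternately forcing each $W_e$ into the range and each $\What_e$ into the domain, with the combinatorial engine being an \emph{extension theorem} --- a finite matching between the c.e.\ sets lying near $A$ and those lying near $\Ahat$ that respects the current ``$e$-states'' (which finite Boolean combinations of $A,W_0,\dots,W_e$ a given ball realizes) can always be extended by one more set. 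The arithmetic complexity of the $\Phi$ one obtains is governed by the complexity of the questions the construction must answer on the fly: ``is this the final $e$-state of this ball?'', ``is $W_e\searrow A$ infinite?'', ``is $W_e\subseteq^* A$?'', ``is this $e$-state realized infinitely often?'', together with the true-stage bookkeeping that organizes the injury.

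In general these questions nest so badly that the existence of a successful run of the back-and-forth is only $\Sigma^1_1$; indeed, by Theorem~\ref{sw}, no arithmetic bound on ``automorphic'' is possible for arbitrary c.e.\ sets. The point for \emph{simple} sets is that simplicity tames the state structure: since $\overline{A}$ and $\overline{\Ahat}$ contain no infinite c.e.\ set, the $e$-states that balls of $\overline{A}$ can realize are tightly constrained, and the ``capture'' and ``recovery'' phenomena that inflate the complexity in the general case are curtailed. As a result, the eventual behaviour of each $W_e$ relative to $A$, and the final $e$-state of each ball, become bona fide arithmetic predicates --- at roughly the $\Sigma^0_4$ level --- rather than arbitrarily complicated ones. (The extreme case is a maximal set, whose superset lattice is trivial and for which Soare's construction \cite{Soare:74} needs only $\Delta^0_3$.)

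Concretely, I would first show that $A$ and $\Ahat$ being automorphic is equivalent to an \emph{arithmetic} matching condition between their state structures; here, and only here, is the hypothesis used, via an arbitrary --- possibly $\Sigma^1_1$ --- automorphism $\Psi$ with $\Psi(A)=\Ahat$, whose mere existence certifies that the matching condition holds. Granted that condition, I would build $\Phi$ by the extension-theorem construction run relative to $\mathbf{0}^{(5)}$: the oracle computes the correct $e$-states, decides which c.e.\ sets are headed into $A$, and selects correct finite traces, while the matching condition guarantees the alternation never gets stuck. Finally one checks in the usual way that $\Phi$ is total, a bijection of $\E^*$, and respects $\cup$, $\cap$, and $A\mapsto\Ahat$; since every step and the set of settling stages are $\mathbf{0}^{(5)}$-computable, $\Phi$ is $\Delta^0_6$.

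The hard part, I expect, is proving the extension theorem in the \emph{effective} form required: the classical version extends under roughly $\Sigma^0_3$ hypotheses, but to keep $\Phi$ at $\Delta^0_6$ one needs the hypotheses, the witnessing traces, and all the restraint bookkeeping to be decidable and choosable within five jumps, and one must verify that simplicity collapses the state complexity \emph{uniformly} --- including on the $\Ahat$-side, where the construction is only guessing preimages. A related subtlety is that $\Psi$ is used purely to know that certain arithmetic facts are true and never to compute anything, so every appeal to it must be recast as an appeal to such a fact. Pinning down precisely why five jumps suffice --- and not four --- is where the real combinatorial bookkeeping lies.
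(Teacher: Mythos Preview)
The paper does not contain a proof of this theorem: it is a survey, and the result is simply quoted from \cite{MR2366962} with no argument given. There is therefore nothing in the present paper to compare your proposal against.

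On its own merits, your outline names the right genre of argument --- a back-and-forth automorphism construction governed by $e$-states and an extension theorem, with simplicity used to bound the arithmetic complexity of the state analysis --- and you are candid that the real work is in making the extension step effective within five jumps. But as written it is a plan rather than a proof: you assert that simplicity collapses the relevant predicates to ``roughly the $\Sigma^0_4$ level'' and that $\mathbf{0}^{(5)}$ therefore suffices, without isolating which predicates those are or why the bound is exactly six rather than five or seven. The actual Cholak--Harrington argument does not simply rerun Soare's $\Delta^0_3$ maximal-set construction with a bigger oracle; it passes through their analysis of $\E_{\mathcal{D}(A)}$ (note that for simple $A$ the ideal $\mathcal{D}(A)$ degenerates, since every c.e.\ set disjoint from $A$ is finite) and an algebraic matching of invariants on the two sides, from which the $\Delta^0_6$ bound falls out of a specific quantifier count. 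Your sketch would need to either reproduce that machinery or supply a genuinely new bookkeeping scheme, and at present it does neither --- the paragraph beginning ``Concretely, I would first show\ldots'' promises an arithmetic matching condition equivalent to being automorphic, but gives no indication of what that condition is or how simplicity delivers it.
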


Recently Harrington improved this result to show:

   \begin{theorem}[Harrington 2012, private email]
     The complexity of the $\mathcal{L}_{\omega_1,\omega}$ formula
     describing the orbit of any simple set is very low (close to 6).
   \end{theorem}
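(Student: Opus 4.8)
The plan is to extract the desired $\mathcal{L}_{\omega_1,\omega}$ formula directly from the back-and-forth argument that proves the $\Delta^0_6$-automorphism theorem for simple sets quoted above, and then to bound its quantifier complexity by tracking how many levels of the arithmetic hierarchy that argument actually consumes. One direction is immediate and cheap: by the definition given above, $A$ is simple iff every c.e.\ set disjoint from $A$ is finite, and ``finite'' is definable in $\E$, so simplicity is a bounded-complexity first-order property and every set automorphic to a simple set is again simple. The content is to isolate, for a \emph{fixed} simple set $A$, the sub-collection of c.e.\ sets automorphic to $A$ by a formula $\varphi_A$ whose complexity does not grow with $A$ (it must depend on $A$, since distinct simple sets can lie in distinct orbits).

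I would reorganize the construction of a $\cero^{(6)}$-automorphism as a back-and-forth game played on finite sub-structures of $\E$. In the Cholak--Harrington framework a position records, for finitely many c.e.\ sets together with the target set, a ``state'': which Boolean combinations are finite, which are covered by others, and related smallness conditions. When the target is simple its complement is immune, and the key estimate behind the $\Delta^0_6$ theorem is precisely that all the relevant states, and the builder's winning condition, are decidable in $\cero^{(6)}$ with the quantifier alternations sitting around the sixth level. I would then define, by recursion down the game, formulas $\psi_k$ asserting ``from here the builder still wins'': each $\psi_k$ is a bounded Boolean combination of arithmetic state-conditions — pulled back into $\E$ through their $\E$-definitions — together with infinitary conjunctions and disjunctions ranging over the possible next moves (all c.e.\ sets). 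Taking the fixed point $\varphi_A$ of this recursion relative to $A$, membership of $W_i$ in the orbit of $A$ is equivalent to $\E \models \varphi_A(W_i)$, because the game characterizes the existence of a $\cero^{(6)}$-automorphism carrying $A$ to $W_i$ and, by the quoted theorem, that is equivalent to $W_i \approx A$.

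To bound the $\mathcal{L}_{\omega_1,\omega}$-complexity of $\varphi_A$, note that the infinitary connectives contribute nothing to quantifier-alternation depth, so the complexity is controlled by two ingredients: the alternation depth of the arithmetic state-predicates once translated into $\E$-formulas, which is the ``close to $6$'' inherited from the $\cero^{(6)}$ bound; and a fixed finite overhead for expressing that a move correctly extends the current finite sub-structure and for closing off the recursion. Neither ingredient depends on $A$, so they combine into a single natural number, close to $6$, bounding the complexity of $\varphi_A$ uniformly over all simple $A$.

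The main obstacle is the reorganization step. In general the automorphism machinery is genuinely unbounded in arithmetic complexity — this is exactly what makes the properly $\Delta^0_\alpha$ orbits of Theorem~\ref{sec:maincor} possible — so the whole point is to show that for simple sets the back-and-forth is \emph{finitely determined}: the state space collapses and the winning condition stabilizes by a fixed low level rather than being refined without bound. Proving this collapse rigorously, and then accounting honestly for the small constant overhead so that one may write ``close to $6$'' rather than merely ``some constant,'' is where the real work lies; the translation of the arithmetic state-predicates into $\E$ and the fixed-point bookkeeping are routine once the collapse is established.
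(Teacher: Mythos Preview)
There is nothing in the paper to compare your proposal against. This is a survey, and the theorem is stated without proof, attributed to Harrington via private email (2012); no argument, sketch, or reference to a written source is given. So the comparison you were asked to make is vacuous.

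On the substance of your sketch: the strategy of reading off a Scott-type $\mathcal{L}_{\omega_1,\omega}$ sentence from the back-and-forth behind the $\Delta^0_6$-automorphism theorem for simple sets is the natural one, and the heuristic that the quantifier depth of the resulting sentence tracks the level of the oracle used in the construction is sound in spirit. You are also right that the entire difficulty is what you call the ``collapse'': showing that for simple targets the back-and-forth really terminates at a fixed finite level rather than iterating unboundedly. Your proposal names this as the main obstacle but does not attack it, so as written it is a plan rather than a proof. One technical point to be careful about: you assert that the infinitary connectives ``contribute nothing to quantifier-alternation depth,'' but in the usual hierarchy of $\mathcal{L}_{\omega_1,\omega}$ formulas (the $\Sigma_\alpha/\Pi_\alpha$ hierarchy used to define Scott rank) it is precisely the infinitary conjunctions and disjunctions, not the first-order quantifiers, that increment the rank. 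If your back-and-forth has unboundedly many rounds, the fixed-point recursion you describe will produce a sentence of unbounded rank regardless of how low the arithmetic state predicates sit; the bound must come from showing that only finitely many rounds are needed, which is again the collapse you flagged but did not prove.
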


   That leads us to make the following conjecture:

   \begin{conjecture}
     We can build the above orbits in Theorem~\ref{sec:maincor} to
     have complexity close to $\alpha$ in terms of the
     $\mathcal{L}_{\omega_1,\omega}$ formula describing the orbit.
   \end{conjecture}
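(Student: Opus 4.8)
The plan is to go back to the construction of the properly $\Delta^0_\alpha$ orbits in \cite{MR2425182} and re-run it while bookkeeping, level by level, the complexity of an $\mathcal{L}_{\omega_1,\omega}$ formula that isolates the orbit. The orbit of the set $A$ produced there is controlled by two kinds of data: a collection of purely structural features of $A$ inside $\E$ --- maximality-type conditions, control of the lattice of supersets and of the splits of $A$, and the like --- which are already known (this is exactly the content of the simple-set analysis and of Harrington's refinement of it) to be captured by $\mathcal{L}_{\omega_1,\omega}$ formulas of small, fixed complexity; and a ``coding tower'' that embeds a $\Sigma^0_\alpha$-complete (or $\Pi^0_\alpha$-complete) set $S$ into the orbit by placing the bits of $S$ at successively deeper locations in a nested family of c.e.\ sets attached to $A$. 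The first step is to separate these two and observe that $B \approx A$ iff $B$ has the same structural type as $A$ and the coding tower read off from $B$ recovers the same $S$.

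The second step is to express the ``same coding tower'' clause as an $\mathcal{L}_{\omega_1,\omega}$ formula by unfolding a recursion on $\alpha$. At the bottom level the coding locations are c.e.\ sets whose relevant behaviour --- finite versus infinite, or whether a designated ball has been captured --- is a Boolean combination of $\subseteq$-statements and so of constant complexity; at each successive level one quantifies (a countable conjunction or disjunction over potential witnesses) over the data one level down, raising the $\mathcal{L}_{\omega_1,\omega}$ quantifier rank by a fixed finite amount. Iterating through $\alpha$ and then reattaching the structural part from step one gives a formula of complexity $c\alpha + d$ for constants $c,d$ coming from the per-level overhead; the substance of the conjecture is that the construction can be arranged so that $c=1$ and $d$ is small, i.e.\ so that each coding level costs essentially a single alternation rather than a block of them --- the same compression phenomenon by which Harrington pushes the simple-set orbit bound down to ``close to $6$''.

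For the matching lower bound I would invoke Theorem~\ref{sec:maincor} directly. Since $\E$ with $\subseteq$ is a $\Pi^0_2$ structure, any $\mathcal{L}_{\omega_1,\omega}$ formula of complexity $\beta$ defining the orbit of $A$ makes the index set $\{i : W_i \approx A\}$ sit at roughly level $\beta$ of the arithmetic hierarchy; as that index set is \emph{properly} $\Delta^0_\alpha$, no such $\beta$ can be bounded appreciably below $\alpha$. Combined with the $c=1$ upper bound this shows the describing formula has complexity within a fixed constant of $\alpha$, which is the intended reading of ``complexity close to $\alpha$''.

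The hard part will be step two, and specifically forcing $c=1$. The extension-theoretic machinery of \cite{MR2425182} --- the $\mathcal{L}$-systems and the state-matching that drive the $\Delta^0_\alpha$ automorphism --- is designed to \emph{build} the automorphism, and the formula one naively extracts from a level-$\alpha$ back-and-forth argument tends to carry several alternations of overhead per level from verifying that states can always be extended. Arranging the coding so that the ``extendible'' clause at level $n+1$ is itself already a single existential or universal assertion about level-$n$ data, rather than a $\Sigma$-over-$\Pi$ sandwich, is the delicate point, and it is the precise analogue of what Harrington had to do in the simple-set case. One then has to check that this compression has not made the orbit \emph{too} simple --- that the index set is still properly $\Delta^0_\alpha$ --- which is the second half of the difficulty.
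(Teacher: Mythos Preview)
The statement you are attempting to prove is labeled a \emph{Conjecture} in the paper, and the paper gives no proof of it; it is presented as an open problem motivated by Harrington's result on simple sets. There is therefore nothing in the paper to compare your proposal against.

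What you have written is not a proof but a research outline, and you yourself identify the genuine obstacle: pushing the per-level overhead down to $c=1$. Your upper-bound sketch is plausible in shape --- separate the structural type from the coding tower, then unfold the tower level by level --- but the claim that each coding level can be made to cost a single quantifier alternation is exactly the content of the conjecture, not something you have established. The machinery of \cite{MR2425182} does not obviously admit this compression, and Harrington's ``close to $6$'' result for simple sets relies on features (simplicity, in particular) that the sets in Theorem~\ref{sec:maincor} need not have. Your lower-bound paragraph is on firmer ground, since the proper-$\Delta^0_\alpha$ index set does force the describing formula to have complexity at least roughly $\alpha$, but that direction was never in doubt. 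As a plan of attack the outline is reasonable; as a proof it has the same gap the paper leaves open.
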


   \section{Complete Sets}

   Perhaps the biggest questions on the c.e. sets are the following:

 \begin{question}[Completeness]
   Which c.e.\ sets are automorphic to complete sets?
 \end{question}

 Motivation for this question dates back to Post.  Post was trying to
 use properties of the complement of a c.e.\ set to show that the set
 was not complete.  In the structure $\mathcal{E}$ all the sets in the
 same orbit have the same definable properties.

 By \citet{Harrington.Soare:98}, \cite{Harrington.Soare:91}, and
 \cite{Harrington.Soare:96}, we know that not every c.e.\ set is
 automorphic to a complete set and, furthermore, there is a dichotomy
 between the ``prompt'' sets and the ``tardy'' (nonprompt) sets with
 the ``prompt'' sets being automorphic to complete sets. We will
 explore this dichotomy in more detail, but more definitions are
 needed:


\begin{definition}
  $X = (W_{e_1} - W_{e_2}) \cup (W_{e_3} - W_{e_4}) \cup \ldots
  (W_{e_{2n-1}} - W_{e_{2n}})$ iff $X$ is $2n$-c.e.\ and $X$ is
  $2n+1$-c.e.\ iff $X = Y \cup W_e$, where $Y$ is $2n$-c.e.
\end{definition}

   \begin{definition}
     Let $X^n_e$ be the $e$th $n$-c.e.\ set.  $A$ is \emph{almost
       prompt} iff there is a computable nondecreasing function $p(s)$
     such that for all $e$ and $n$ $ \text{if } X^n_e = \overline{A}
     \text{ then } (\exists x) (\exists s) [ x \in X^n_{e,s} \text{
       and } x \in A_{p(s)}]$.
   \end{definition}

\begin{theorem}[\citet{Harrington.Soare:96}]
  Each almost prompt sets are automorphic to some complete set.
\end{theorem}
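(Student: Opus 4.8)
The plan is to construct an automorphism $\Phi$ of $\E$ together with its image $B = \Phi(A)$, arranging during the construction that $B$ is Turing complete, i.e.\ $\emptyset' \leT B$. Rather than defining $\Phi$ outright, I would run the Harrington--Soare \emph{extension theorem} machinery: one enumerates in stages a ``hat side'' copy of $\E$ --- a uniformly c.e.\ list $\{\hat W_e\}$ of which $B$ plays the role of the image $\hat A$ --- and a finite-injury correspondence between left-side balls and hat-side balls, watching balls flow through the dynamic \emph{states} they occupy (which of the finitely many sets currently under consideration, on each side, a ball has entered). The extension theorem reduces the existence of $\Phi$ to a package of purely combinatorial \emph{covering} requirements (every $\hat W_e$ is correctly covered and every hat-side c.e.\ set acquires a left-side preimage, so $\Phi$ is onto) and \emph{correctness} requirements (the partner map respects $\subseteq$, $\sqcup$, $\emptyset$, $\omega$), the one extra global constraint being that a left ball $x$ lies in $A$ precisely when its hat-side partner lies in $B$.

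Next I would set up the priority tree with the covering and correctness requirements together with coding requirements $\C_n$: the $n$th hat-side coding marker $c_n$ enters $B$ iff $n$ enters $\emptyset'$. The $\C_n$ are what force $B$ complete, and they are where the hypothesis is spent: leaving a marker out of $B$ is free, but committing $c_n$ to $B$ is done \emph{only} after we have observed $A$ enumerate, promptly, a left ball of the state needed to keep the partner map consistent. In effect, each time $\C_n$ (and the covering bookkeeping) wants a hat-side ball routed into $B$, we declare a batch of left balls as candidates to be pulled into $A$; if $A$ never obliges then that batch is a genuine subset of $\overline A$, and the construction is arranged so that, across all the relevant possibilities, one such batch would in the limit \emph{equal} $\overline A$ as an $n$-c.e.\ set. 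The definition of \emph{almost prompt} --- a single computable modulus $p$ good for \emph{every} $n$-c.e.\ index that enumerates $\overline A$ --- is exactly calibrated to guarantee that $A$ then enumerates a suitable element within the allowed delay, so the covering and coding requirements can both be honoured.

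With the requirements verified, the extension theorem delivers $\Phi \in \Isom(\E)$ with $\Phi(A) = B$, and the satisfied $\C_n$ give $\emptyset' \leT B$, so $B$ is complete. The main obstacle is the same balancing act that runs through all of the Harrington--Soare automorphism constructions: the coding requirements $\C_n$ repeatedly demand hat-side commitments to $B$, the correctness requirements forbid states from being spoiled, and the covering requirements can be met only as fast as $A$ enumerates, so everything rests on showing that almost promptness supplies exactly the speed required and that the injury can be organised so each requirement is disturbed only finitely often. The delicate point is matching the $n$-c.e.\ quantifier in the definition of almost prompt to the Boolean complexity of the auxiliary sets that actually arise in the cover construction --- getting that bookkeeping right is what makes the hypothesis both necessary and sufficient here.
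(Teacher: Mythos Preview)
The paper does not prove this theorem. It is a survey, and the statement is simply quoted from \cite{Harrington.Soare:96} without any argument; there is no ``paper's own proof'' to compare your proposal against.

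That said, as a sketch of the original Harrington--Soare argument your outline is broadly on target: one does run the extension-theorem (dynamic states, hat-side enumeration, covering/correctness requirements) together with coding requirements $\C_n$ that make $B$ complete, and almost promptness is precisely what lets $A$ supply, within the delay $p$, the left-side elements needed so that hat-side coding moves into $B$ do not spoil the state correspondence. One caution: your description of how the hypothesis is spent is a bit loose. The $n$-c.e.\ sets $X^n_e$ that arise are not ad hoc ``batches of candidates'' but the Boolean combinations of the finitely many $W_i$ currently being monitored at a node of the tree; the point is that if $A$ were never prompt on the relevant state then the set of balls in that state would be an $n$-c.e.\ set equal to $\overline{A}$, contradicting almost promptness for the modulus $p$ handed to us by the recursion theorem. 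Making that identification precise (and getting the recursion-theorem use of $p$ right) is the real content; your last paragraph gestures at this but stops short of it. If you intend this as more than a plan, that is the place to expand.
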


\begin{definition}
  $D$ is \emph{$2$-tardy} iff for every computable nondecreasing
  function $p(s)$ there is an $e$ such that $ X^2_e = \overline{D}$
  and $(\forall x) (\forall s) [\text{if }x \in X^2_{e,s}$ then $ x
  \not\in D_{p(s)}]$
\end{definition}

\begin{theorem}[\citet{Harrington.Soare:91}]
  There are $\E$ definable properties $Q(D)$ and $P(D,C)$ such that
  \begin{enumerate}
  \item $Q(D)$ implies that $D$ is $2$-tardy and hence the orbit of
    $D$ does not contain a complete set.
  \item for $D$, if there is a $C$ such that $P(D,C)$ and $D$ is
    $2$-tardy then $Q(D)$ (and $D$ is high).
  \end{enumerate}
\end{theorem}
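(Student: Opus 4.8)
The plan is to write down explicit first‑order formulas $Q(D)$ and $P(D,C)$ in the language of $\subseteq$ and then verify the two implications separately. For $Q(D)$ I would take a sentence asserting that $D$ is noncomputable and that there exist c.e.\ sets $C\supseteq D$ together with a uniformly presented array $\{R_n\}$ of c.e.\ sets which, read \emph{dynamically}, encodes: ``against every computable reclocking of the enumeration of $\overline D$, one can present $\overline D$ as a $2$‑c.e.\ set whose members are never promptly certified into $D$.'' The conceptual heart is that this essentially dynamic demand can be rendered by a \emph{static} sentence about $\E$ — exactly as, in the discussion above, finiteness, ``$X$ is computable,'' disjoint union, and ``$W-A$ is c.e.'' were all seen to be expressible from $\subseteq$ alone; designing $Q$ so that this translation is faithful is the delicate part. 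The auxiliary $P(D,C)$ would be a definable ``thickness'' condition on the pair: roughly, $C\supseteq D$ is a c.e.\ set for which the interval $[D,C]$ in $\E$ still splits into infinitely many nontrivial pieces below every c.e.\ set not already absorbed by $D$ — this is the room that the construction in part (2) consumes.

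For (1), assume $Q(D)$ holds, witnessed by $C$ and the array $\{R_n\}$, and let $p$ be an arbitrary computable nondecreasing function. Using the witnesses one enumerates (uniformly in $p$) a $2$‑c.e.\ set $X^2_e$: a number $x$ is put into $X^2_e$ only at stages $s$ where the array currently guarantees $x\notin D_{p(s)}$, and is taken back out the moment $x$ actually enters $D$, with the $R_n$'s controlling the timing block by block. The blocking is arranged so that whenever $x$ is removed because it enters $D$, it was already absent from $X^2_{e,s}$ at every stage $s$ with $p(s)$ past the entry stage, so the prompt‑permitting requirement $x\in D_{p(s)}$ is never met. Hence $X^2_e=\overline D$ and no prompt permitting occurs; since $p$ was arbitrary, $D$ is $2$‑tardy, and the ``hence'' is then the fact — part of the prompt--tardy analysis recalled above — that no $2$‑tardy set is automorphic to a complete set.

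For (2), assume $D$ is $2$‑tardy and fix $C$ with $P(D,C)$; now the witnesses for $Q(D)$ must be built. I would run a priority construction living inside the interval $[D,C]$: the positive requirements install the array $\{R_n\}$ that performs the blocking, the $2$‑tardiness hypothesis — applied to the various computable reclockings that the construction itself generates — supplies at each stage the ``late'' elements that keep any requirement from being injured infinitely often, and $P(D,C)$ guarantees the splitting room in which to place those elements. The same construction, with $P(D,C)$ used to make the choice of tardiness‑witnesses uniform, moreover produces a function computable from $D'$ that dominates every total computable function — namely the stage function recording when the construction finishes reacting to each reclocking — so $D$ is high by Martin's domination characterization of the high c.e.\ degrees.

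The main obstacle is direction (2): converting the non‑$\E$‑definable hypothesis ``$D$ is $2$‑tardy'' into the genuinely $\E$‑definable configuration $Q(D)$. This forces one to (a) calibrate $P(D,C)$ precisely — strong enough to furnish both the coding space and the uniformity, weak enough to be definable and attainable alongside $2$‑tardiness — and (b) push through an infinite‑injury argument that diagonalizes against all computable reclockings of the enumeration of $D$ at once while respecting the lattice constraints imposed by $C$. Checking that the resulting $Q$ is truly first‑order in $\subseteq$, rather than merely $\Sigma^0_n$ or $\mathcal{L}_{\omega_1,\omega}$, is what upgrades this from a routine coding into a genuine sharpening of Post's program.
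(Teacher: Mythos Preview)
The paper is a survey and does not contain a proof of this theorem; it is simply stated with a citation to Harrington and Soare. There is thus no proof in the paper against which to compare your proposal.

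Independently of that, your proposal has a genuine gap. You never actually write down $Q(D)$ or $P(D,C)$; you only describe in prose what they are supposed to accomplish and then argue from those informal specifications. Worse, your description of $Q$ builds in ``a uniformly presented array $\{R_n\}$ of c.e.\ sets'' as part of the witnessing data, and your construction in part (2) ``installs the array $\{R_n\}$.'' A first-order formula over $\mathcal{E}$ quantifies over finitely many individual c.e.\ sets, not over uniform $\omega$-sequences of them; quantifying over an array is an $\mathcal{L}_{\omega_1,\omega}$ or index-set move, precisely the kind of thing the theorem is claiming can be avoided. You yourself flag this at the end (``checking that the resulting $Q$ is truly first-order in $\subseteq$ \ldots\ is what upgrades this from a routine coding''), but you never discharge it --- and in the actual Harrington--Soare argument this \emph{is} the content of the theorem. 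Their $Q$ is a specific finite configuration: a handful of existentially quantified c.e.\ sets standing in prescribed inclusion and covering relations to $D$, with a universal quantifier over single c.e.\ sets inside. The work lies in showing that this static, finite-parameter picture forces $2$-tardiness and, conversely, can be installed from $2$-tardiness plus $P(D,C)$. Until you exhibit such a finite configuration and verify both directions against it, what you have is a statement of intent rather than a proof sketch.
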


The $2$-tardy sets are not almost prompt and the fact they are not
almost prompt is witnessed by $e=2$. It would be nice if the above
theorem implied that being $2$-tardy was definable. But it says with
an extra definable condition being $2$-tardy is definable.

\citet{Harrington.Soare:91} ask if each $3$-tardy set is computable by
some $2$-tardy set. They also ask if all low$_2$ simple sets are
almost prompt (this is the case if $A$ is low). With Gerdes and Lange,
Cholak answered these negatively:

\begin{theorem}[\citet*{MR2926283}]\label{sec:work-tardy-sets-1}
  There exists a properly $3$-tardy $B$ such that there is no
  $2$-tardy $A$ such that $B \leq_T A$. Moreover, $B$ can be built
  below any prompt degree. \end{theorem}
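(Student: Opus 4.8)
The plan is to run a priority construction that enumerates a c.e.\ set $B$ together with auxiliary $3$-c.e.\ enumerations of $\overline B$, permitting every number that enters $B$ below a fixed promptly simple set $D$ of the given prompt degree, so that $B\leT D$ automatically. Fix effective listings of the c.e.\ sets $W_i$ and the Turing functionals $\Phi_j$, and split the job into three families of requirements. The requirements $\mathcal T_j$ make $B$ $3$-tardy: for each $j$ with $\varphi_j$ total and nondecreasing we build, uniformly, a $3$-c.e.\ enumeration $X^3_{g(j)}$ of $\overline B$ and keep it slow, i.e.\ $x\in X^3_{g(j),s}$ implies $x\notin B_{\varphi_j(s)}$. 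The requirements $\mathcal N_e$ make $B$ \emph{not} $2$-tardy with respect to $p^\ast(s)=s$: we assign each a large fresh number $x$, and if the candidate enumeration $X^2_e$ ever raises $x$ to membership at a stage $s$ we put $x$ into $B$ at the first $D$-permitted stage thereafter; if $X^2_e=\overline B$ then it must raise $x$ (lest $X^2_e\neq\overline B$ at $x$), so $x$ is dumped into $B$ while still a member of $X^2_e$ and $\mathcal N_e$ is met, and otherwise $\mathcal N_e$ is vacuous. The usual large-witness convention keeps $B$ coinfinite. The requirements $\mathcal R_{i,j}$, one per pair, read ``if $W_i$ is $2$-tardy then $\Phi_j^{W_i}\neq B$''; since a $2$-tardy set is c.e.\ (the definition refers to $D_{p(s)}$), the $\mathcal R_{i,j}$ together give clause (c).

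The core of the construction is $\mathcal R_{i,j}$, and it is the only place where the oracle's being $2$-tardy, rather than merely $3$-tardy, is used. The strategy reserves a large fresh witness $x$ and, during the construction, builds a fast-growing computable nondecreasing $p_{i,j}$. If $W_i$ is $2$-tardy then, applied to $p_{i,j}$, there is an $e$ with $X^2_e=\overline{W_i}$ slow with respect to $p_{i,j}$; we cannot find such an $e$, so the strategy branches on guesses $e=0,1,2,\dots$ along the priority tree, each guess permitted to be discarded as soon as it is seen to be wrong (a slowness violation, or $X^2_e\neq\overline{W_i}$ witnessed), so that along the true path the construction commits to a correct guess. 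On the branch with a correct guess we wait for $\Phi_j^{W_i}(x)\!\downarrow=0$ with use $u$ and for every $y<u$ with $y\notin W_{i,s}$ to have entered $X^2_e$ by some stage $s_1$; then slowness gives $y\notin W_{i,p_{i,j}(s_1)}$ for each such $y$, so $W_i\!\restriction\!u$ cannot change before stage $p_{i,j}(s_1)$, and we put $x$ into $B$ at a $D$-permitted stage inside the window $(s_1,p_{i,j}(s_1))$, diagonalizing. If $W_i$ nonetheless later changes below $u$ we start over with a new witness. The point is that in a $2$-c.e.\ enumeration (rather than a $3$-c.e.\ one) a number enters at most once and, once it leaves because it has entered $W_i$, never returns; this monotonicity is what is meant to force the number of fresh-witness rounds to be finite along the true path, so that eventually a diagonalization is permanent, or else $\Phi_j^{W_i}(x)$ diverges and we win for free. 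With a merely $3$-c.e.\ enumeration a number may leave for caution and re-enter, so ``all small non-$W_i$ numbers are currently in $X^n_e$'' is no longer a reliable freeze signal; this is exactly why the statement is about $2$-tardy oracles and is consistent with $B$ itself being $3$-tardy.

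For the $\mathcal T_j$ requirements we maintain each $X^3_{g(j)}$ by the pattern ``hold $x$ out; raise $x$ to membership once it looks safe to declare $x\in\overline B$; drop $x$ back out if it now looks about to enter $B$ at a stage bad for $\varphi_j$; and raise $x$ to membership again, permanently, once that danger has passed'', so that each $x$ uses at most three verdict changes. Making this coexist with the prompt dumping demanded by the $\mathcal N_e$'s is the scheduling crux: numbers must enter $B$ quickly relative to $p^\ast=\mathrm{id}$, yet each $\mathcal T_j$-enumeration must be given the chance to drop a number out before it enters $B$, and that drop-and-raise is available only once per number. I expect the main obstacle to be precisely this bundle of demands meeting inside $\mathcal R_{i,j}$: choosing the $p_{i,j}$ fast enough that the freeze windows are long, arranging the tree of $e$-guesses to settle along the true path, and --- the delicate point --- bounding the number of fresh-witness rounds using only the one-entry monotonicity of $2$-c.e.\ enumerations, all while the enumeration of $B$ still honours every $\mathcal N_e$ and every $\mathcal T_j$. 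Once the schedule and the tree bookkeeping are in place, the verification that $B$ is properly $3$-tardy, that $B\leT D$, and that no $2$-tardy set computes $B$ should be a routine true-path argument.
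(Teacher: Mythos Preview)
This paper is a survey; it states the theorem with attribution to \cite{MR2926283} but gives no proof, so there is nothing here against which to compare your argument directly.  Judged on its own, the requirement breakdown, the prompt-permitting template for $B\leT D$, the $3$-c.e.\ bookkeeping for the $\mathcal T_j$, and the tree of $e$-guesses for $\mathcal R_{i,j}$ are all plausible ingredients.  The gap is in the core of $\mathcal R_{i,j}$.

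You assert that the one-in/one-out monotonicity of a $2$-c.e.\ enumeration forces the number of fresh-witness rounds to be finite.  It does not, at least not by the argument you give.  In round $k$ you find $\Phi_j^{W_i}(x_k)\!\downarrow=0$ with use $u_k$, wait until every $y<u_k$ outside $W_i$ lies in $X^2_e$, and enumerate $x_k$ into $B$ inside the freeze window $(s_1,p_{i,j}(s_1))$.  Slowness guarantees $W_i\!\restriction\!u_k$ is stable \emph{inside} that window, but says nothing once the window closes; afterwards some $y_k<u_k$ may enter $W_i$ and destroy the computation.  That $y_k$ has now left $X^2_e$ for good and cannot injure you again --- but in round $k+1$ you choose a fresh witness $x_{k+1}$ with a fresh, typically larger, use $u_{k+1}$, and brand-new numbers below $u_{k+1}$ become eligible injurers.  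Nothing in your sketch bounds the supply of such numbers, so nothing bounds the number of rounds: it is entirely consistent with $X^2_e=\overline{W_i}$ being slow for $p_{i,j}$ that every $x_k$ lands in $B$ and every computation is subsequently destroyed, so that $\Phi_j^{W_i}$ and $B$ end up agreeing on all of your witnesses.  You have correctly located the crux (``bounding the number of fresh-witness rounds using only the one-entry monotonicity of $2$-c.e.\ enumerations''), but monotonicity on individual numbers is not by itself enough.  What is missing is either a strategy that refrains from committing the witness to $B$ until a genuinely stable configuration is seen (e.g.\ watching for any $y<u$ to drop out of $X^2_e$ \emph{before} acting and, on a correct guess, restarting on the \emph{same} witness once that $y$ enters $W_i$), or a contrapositive construction that converts an infinite run of forced $W_i$-changes into a single function $p$ certifying that $W_i$ was not $2$-tardy after all.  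Your sketch supplies neither, and the hand-wave to ``routine true-path argument'' does not close this hole.
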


\begin{theorem}[\citet*{MR2926283}]
  There is a low$_2$, simple, $2$-tardy set. \end{theorem}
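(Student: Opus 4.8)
The plan is to build $A$ as a c.e.\ set by a tree of strategies, meeting for each $e$ three requirements. The simplicity requirement $R_e$ asks that $W_e\cap A\neq\emptyset$ whenever $W_e$ is infinite; as usual $R_e$ is allowed to put into $A$ only numbers $>2e$, so that $\overline A$ stays infinite and $A$ is genuinely simple. The tardiness requirement $N_e$ aims to defeat $\varphi_e$ as a candidate promptness function: by the Recursion Theorem we fix in advance a uniformly computable supply of $2$-c.e.\ indices, each set of the form $W_f-W_g$ with $f,g$ controlled by us, and $N_e$ works with one such copy $D_e$ at a time, intending that, if $\varphi_e$ is total and nondecreasing, the final copy satisfies $D_e=\overline A$ and $x\in D_{e,s}\Rightarrow x\notin A_{\varphi_e(s)}$ for all $x,s$ --- which is exactly what $2$-tardiness demands. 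The lowness$_2$ requirement $L_e$ (of the standard Robinson type, guessing whether $\Phi^A_e$ is total and stabilising its use) is imposed so that $A''\leT\emptyset''$. One can do no better than low$_2$ here, since, as recalled above, a low simple set is almost prompt and hence not $2$-tardy; so the $N_e$-modules are intrinsically at the $\emptyset''$ level, and the whole problem is to make them coexist with the $R_e$'s while keeping $A$ low$_2$.

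The device behind $N_e$ is \emph{delayed enumeration}. The ``$W_f$-side'' of the current copy $D_e$ is filled with all of $\omega$ in order of magnitude, so in the limit $D_e=\omega-W_g$; hence to force $D_e=\overline A$ it is enough to feed into the ``$W_g$-side'' precisely the numbers that ultimately enter $A$. Accordingly, when some $R_j$ wants to enumerate a number $x$ into $A$ at a stage $\sigma$, it does not act at once: it waits until $\varphi_e(\sigma)$ has converged for each $N_e$ that currently holds $x$ in its $W_f$-side (finitely many), it enumerates $x$ into each such $W_g$-side at stage $\sigma$, it schedules the actual entry of $x$ into $A$ for a stage $t>\varphi_e(\sigma)$ for every such $e$, and only then, at stage $t$, does it put $x$ into $A$. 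Since $\varphi_e$ is nondecreasing, for every stage $s$ at which $x$ is still in $D_{e,s}$ one has $\varphi_e(s)\le\varphi_e(\sigma)<t$, so $x\notin A_{\varphi_e(s)}$ and no violation is ever recorded; and since the $W_g$-sides receive exactly the numbers that enter $A$, the final copy of $D_e$ equals $\overline A$. If a scheduled entry must be cancelled, because the responsible $R_j$ is injured, then any $N_e$ that had already loaded $x$ into its $W_g$-side is spoiled and must pass to a fresh copy $D_e$ from the supply; the tree is arranged so that along the true path no strategy guesses wrongly about the injuries above it, hence no cancellations occur along the true path and each $N_e$ settles on a correct final copy.

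The verification has the expected three parts. First, each $R_e$ is met: an infinite $W_e$ supplies cofinitely many candidate witnesses, at each stage only finitely many numbers are held up by higher-priority delays or restraints, and every individual delay is finite, using the guess that the finitely many relevant $\varphi_e$ are total. Second, each $N_e$ on the true path is met: by the analysis above, if $\varphi_e$ is total and nondecreasing then the final copy witnesses the failure of $\varphi_e$, and otherwise there is nothing to do. Third, $A$ is low$_2$: the $L_e$ are satisfied by the usual argument, which along the true path needs only that the finitely many outcomes relevant to $L_e$ stabilise to a value computable from $\emptyset''$.

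The step I expect to be the main obstacle is the global coordination of the delays with the positive requirements. A single $N_e$ eventually wants every number either in its $W_f$-side or already accounted for, so essentially every future enumeration into $A$ must pass through the handshake with $N_e$, and with every $N_{e'}$ of higher priority; if any one of those $\varphi_{e'}$ turns out not to be total, a careless protocol would stall the enumeration of $x$ into $A$ forever and so kill some simplicity requirement. It is precisely in resolving this that the $\Pi^0_2$ guessing about totality of the $\varphi_e$'s is forced onto a tree of strategies, and the interaction of that guessing with the positive requirements --- arranging that along the true path every delay is finite, every cancellation is avoided, and the entire bookkeeping stays $\emptyset''$-computable --- is the technical heart of the argument. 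Once the tree and the injury/cancellation conventions are in place, the remaining estimates are routine.
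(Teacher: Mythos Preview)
This paper is a survey and contains no proof of the theorem; it merely cites the result from Cholak--Gerdes--Lange \cite{MR2926283}. So there is nothing in the paper itself to compare your argument against. That said, your sketch is along the standard lines and is essentially the approach of the cited source: for each candidate promptness function $\varphi_e$, build under your own control a $2$-c.e.\ witness $D_e=W_f-W_g$ with $D_e=\overline A$, using delayed enumeration into $A$ so that no $x$ ever appears simultaneously in $D_{e,s}$ and in $A_{\varphi_e(s)}$; organise the simplicity, tardiness, and low$_2$ modules on a tree with $\Pi^0_2$ guessing at the totality of the $\varphi_e$; and read low$_2$ness off the $\emptyset''$-computability of the true path. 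Your diagnosis that low$_2$ is optimal (since low simple sets are almost prompt) is also correct and is exactly the motivation the survey gives for the result.

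Two minor points of repair. First, calling the low$_2$ module ``Robinson type'' is a misnomer: Robinson's trick is a lowness/permitting device, whereas here low$_2$ness comes from the tree bookkeeping---the true path is $\emptyset''$-computable, and along it the totality of each $\Phi_e^A$ is decided. Second, your handshake protocol puts $x$ into the $W_g$-side only for those higher-priority $N_e$ that \emph{already} hold $x$ in their $W_f$-side at the moment $R_j$ commits; as written this leaves a gap, since an $N_e$ might enumerate $x$ into $W_f$ during the delay window and thereby record a violation. The clean fix is either to put $x$ into every higher-priority $W_g$ immediately (whether or not $x$ is yet in the corresponding $W_f$), or to have each $N_e$ enumerate a number into $W_f$ only at stages at which that number is not yet in $A$. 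With either adjustment your outline goes through.
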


Moreover, with Gerdes and Lange, Cholak showed that there are
definable (first-order) properties $Q_n(A)$ such that if $Q_n(A)$
then $A$ is $n$-tardy and there is a properly $n$-tardy set $A$ such
that $Q_n(A)$ holds. Thus the collection of all c.e.\ sets not
automorphic to a complete set breaks up into infinitely many orbits.

But, even with the work above, the main question about completeness
and a few others remain open.  These open questions are of a more
degree-theoretic flavor. The main still open questions are:

\begin{question}[Completeness]
  Which c.e.\ sets are automorphic to complete sets? \end{question}

\begin{question}[Cone Avoidance]\label{sec:complete-sets}
  Given an incomplete c.e.\ degree $\mathbf{d}$ and an incomplete
  c.e.\ set $A$, is there an $\Ahat$ automorphic to $A$ such that
  $\mathbf{d} \not\leq_T \Ahat$? \end{question}

It is unclear whether these questions have concrete answers. Thus the
following seems reasonable.

\begin{question}
  Are these arithmetical questions? \end{question}

Let us consider how we might approach these questions. One possible
attempt would be to modify the proof of Theorem~\ref{sw} to add
degree-theoretic concerns. Since the coding comes from how $A$
interacts with the sets disjoint from it, we should have reasonable
degree-theoretic control over $A$. The best we have been able to do so
far is alter Theorem~\ref{sw} so that the set constructed has
hemimaximal degree and everything in its orbit also has hemimaximal
degree.  However, what is open is whether the orbit of any set
constructed via Theorem~\ref{sw} must contain a representative of
every hemimaximal degree or only have hemimaximal degrees. If the
infinite join of hemimaximal degrees is hemimaximal then the degrees
of the sets in these orbits only contain the hemimaximal degrees.  But,
it is open whether the infinite join of hemimaximal degrees is hemimaximal.


\subsection{Tardy Sets}

As mentioned above, there are some recent results on $n$-tardy and
very tardy sets (a set is very tardy iff it is not almost prompt). But
there are several open questions related to this work. For example, is
there a (first-order) property $Q_\infty$ so that if $Q_\infty(A)$
holds, then $A$ is very tardy (or $n$-tardy, for some $n$). Could we
define $Q_\infty$ such that $Q_n(A)\implies Q_\infty(A)$? How do
hemi-$Q$ and $Q_3$ compare? But the big open questions here are the
following:

\begin{question}
  Is the set $B$ constructed in Theorem~\ref{sec:work-tardy-sets-1}
  automorphic to a complete set? If not, does $Q_3(B)$ hold? 
\end{question}

It would be very interesting if both of the above questions have a
negative answer.

Not a lot about the degree theoretic properties of the $n$-tardies is
known. The main question here is whether
Theorem~\ref{sec:work-tardy-sets-1} can be improved to $n$ other than
$2$.

\begin{question}
  For which $n$ are there $n+1$ tardies which are not computed by
  $n$-tardies? \end{question}

But there are many other approachable questions. For example, how do
the following sets of degrees compare: 
\begin{itemize}
\item the hemimaximal degrees? 
\item the tardy degrees? 
\item for each $n$, $\{\mathbf{d} : $ there is an $n$-tardy $D$ such
  that $\mathbf{d} \leq_T D\}$? 
\item $\{\mathbf{d} : $ there is a $2$-tardy
  $D$ such that $Q(D)$ and $\mathbf{d} \leq_T D\}$?  
\item $\{\mathbf{d} : $ there is an $A \in \mathbf{d}$ which is not
  automorphic to a complete set$\}$?
\end{itemize}
Does every almost prompt set compute a $3$-tardy? Or a very tardy?
\citet{Harrington.Soare:98} show there is a maximal $2$-tardy set. So
there are $2$-tardy sets which are automorphic to complete sets. Is
there a nonhigh, nonhemimaximal, $2$-tardy set which is automorphic to
a complete set?

\subsection{Cone Avoidance, Question~\ref{sec:complete-sets}}

The above prompt vs.\ tardy dichotomy gives rise to a reasonable way
to address Question~\ref{sec:complete-sets}.  An old result of Cholak
\cite{mr95f:03064} and, independently, Harrington and Soare
\cite{Harrington.Soare:96}, says that every c.e.\ set is automorphic
to a high set.  Hence, a positive answer to both the following
questions would answer the cone avoidance question but not the
completeness question.  These questions seem reasonable as we know how
to work with high degrees and automorphisms, see \cite{mr95f:03064},

\begin{question}
  Let $A$ be incomplete.  If the orbit of $A$ contains a set of high
  prompt degree, must the orbit of $A$ contain a set from all high
  prompt degrees?
\end{question}

\begin{question}
  If the orbit of $A$ contains a set of high tardy degree, must the
  orbit of $A$ contain a set from all high tardy degrees?
\end{question}

Similarly we know how to work with prompt degrees and automorphisms,
see \citet*{mr92j:03039} and \citet{Harrington.Soare:96}.  We should
be able to combine the two. No one has yet explored how to work with
automorphisms and tardy degrees.

\section{$\D$-Maximal Sets}

In the above sections we have mentioned maximal and hemimaximal sets
several times.  It turns out that maximal and hemimaximal sets are
both $\mathcal{D}$-maximal.

\begin{definition}
  $\mathcal{D}(A) = \{ B : \exists W ( B \subseteq A \cup W \text{ and
  } W \cap A = \emptyset)\}$ under inclusion. Let
  $\E_{\mathcal{D}(A)}$ be $\E$ modulo $\mathcal{D}(A)$.
\end{definition}

$\mathcal{D}(A)$ is the ideal of c.e.\ sets of the form $\tilde{A}
\sqcup \tilde{D}$ where $\tilde{A} \subseteq A$ and $\tilde{D} \cap A
= \emptyset$.  

\begin{definition}
  $A$ is \emph{$\mathcal{D}$-hhsimple} iff $\E_{\mathcal{D}(A)}$ is a
  $\Sigma^0_3$ Boolean algebra.  $A$ is \emph{$\mathcal{D}$-maximal}
  iff $\E_{\mathcal{D}(A)}$ is the trivial Boolean algebra iff for all
  c.e.\ sets $B$ there is a c.e. set, $D$, disjoint from $A$, such
  that either $B \subset A \cup D$ or $B \cup D \cup A = \omega$.
\end{definition}
   
Maximal sets and hemimaximal sets are $\mathcal{D}$-maximal.  Plus,
there are many other examples of $\mathcal{D}$-maximal sets. In fact,
with the exception of the creative sets, all known elementary definable
orbits are orbits of $\mathcal{D}$-maximal sets.  In the lead up to
Theorem~\ref{sw}, Cholak and Harrington were able to show:

\begin{theorem}[\cite{mr2004f:03077}]
   If $A$ is $\mathcal{D}$-hhsimple and $A$ and $\Ahat$ are in the
   same orbit then $\E_{\mathcal{D}(A)} \cong_{\Delta^0_3}
   \E_{\mathcal{D}(\Ahat)}$.
\end{theorem}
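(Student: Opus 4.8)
The plan is to split the argument into a soft, lattice-theoretic part and a hard, effectivity part, and to do the hard part by an $\emptyset''$-effective back-and-forth in which the ambient automorphism is used only as a non-effective guarantee that a matching always exists. First I would observe that the automorphism does the soft work. Let $\Phi$ be an automorphism of $\E$ with $\Phi(A)=\Ahat$. Since $\mathcal{D}(A)$ is defined from $A$ using only $\subseteq$, $\cap$, $\cup$ and $\emptyset$, all of which $\Phi$ preserves, we get $\Phi[\mathcal{D}(A)]=\mathcal{D}(\Ahat)$, so $\Phi$ induces a Boolean-algebra isomorphism $\bar\Phi\colon\E_{\mathcal{D}(A)}\to\E_{\mathcal{D}(\Ahat)}$; in particular the two quotients are classically isomorphic. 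By $\mathcal{D}$-hhsimplicity of $A$, $\E_{\mathcal{D}(A)}$ is a $\Sigma^0_3$ Boolean algebra, and the same uniform presentation scheme (the objects are c.e.\ sets, the operations are computable on indices, and ``$B\in\mathcal{D}(\Ahat)$'' is $\Sigma^0_3$ in any index for $\Ahat$) exhibits $\E_{\mathcal{D}(\Ahat)}$ as a $\Sigma^0_3$ Boolean algebra as well. So the theorem reduces to the claim that two $\Sigma^0_3$ Boolean algebras which are isomorphic are $\Delta^0_3$-isomorphic, where we are allowed to use the existing $\bar\Phi$ as a guide.

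For the reduced claim one cannot simply use $\bar\Phi$, whose complexity is only that of $\Phi$ and hence a priori as bad as $\Sigma^1_1$. Instead I would build a fresh isomorphism of complexity $\Delta^0_3$ directly from the two fixed $\Sigma^0_3$ presentations by a back-and-forth construction run relative to $\emptyset''$: maintain a finite partial map between finitely generated subalgebras, and to extend it by the next element search with the $\emptyset''$-oracle for a partner realizing the correct $\Sigma^0_3$ ``type'', i.e.\ agreeing on which Boolean combinations of the already-committed elements lie in the relevant ideal. Because ``$=$'' in these quotients is genuinely $\Sigma^0_3$, hence only $\emptyset''$-c.e.\ and not $\emptyset''$-decidable, commitments can later be revealed to be wrong, so the construction must be organized on a tree of guesses with $\Pi^0_2$ outcomes in the style of Soare's $\Delta^0_3$-automorphism (``true stage'') method, and the a priori existence of $\bar\Phi$ is used to show that along the true path every requirement is eventually met by a correct, uninjured assignment, so the limit function is a total, well-defined (on $\mathcal{D}(A)$-classes), surjective Boolean-algebra isomorphism. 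An alternative route, presumably closer to the methods of the cited paper, is to feed $A$, $\Ahat$ into the extension theorem for automorphisms of $\E$ to manufacture a \emph{new} automorphism $\Psi$ with $\Psi(A)=\Ahat$ whose action is $\Delta^0_3$ on the portion of $\E$ that matters for $\E_{\mathcal{D}(A)}$; here the hypothesis that the quotient is exactly $\Sigma^0_3$ (and no worse) is what keeps the construction inside $\Delta^0_3$, and one then takes $\bar\Psi$.

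The step I expect to be the main obstacle is pinning the complexity at $\Delta^0_3$ rather than at $\Delta^0_4$: since ideal-membership is $\Sigma^0_3$, a blind $\emptyset''$-back-and-forth fails outright, and essentially the whole weight of the proof lies in showing that an $\emptyset''$-oracle nevertheless suffices once the construction is set up as a priority argument guided by the known isomorphism and once one genuinely uses that the presentations are $\Sigma^0_3$ \emph{Boolean algebras}, not arbitrary $\Sigma^0_3$ structures (for which the statement would be false). The remaining bookkeeping — that the output respects the $\mathcal{D}(A)$-equivalence, is total, and is onto — should be routine once the tree-of-strategies framework is in place. If one instead argues via the extension theorem, the obstacle shifts to verifying that its combinatorial hypotheses hold for the restricted class at hand and that the restriction of the resulting automorphism to $\E_{\mathcal{D}(A)}$ is indeed $\Delta^0_3$.
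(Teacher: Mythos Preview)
This survey does not prove the theorem; it is quoted from \cite{mr2004f:03077} without argument, so there is no in-paper proof to compare against. The following addresses your proposal relative to what that source actually does.

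Your soft part is fine: an automorphism of $\E$ sends $\D(A)$ onto $\D(\Ahat)$ and so induces an abstract isomorphism of the quotients. The problem is the reduction you make for the hard part. You pass to the bare assertion ``two isomorphic $\Sigma^0_3$-presented Boolean algebras are $\Delta^0_3$-isomorphic,'' flagging that it fails for general $\Sigma^0_3$ structures but suggesting that the Boolean-algebra hypothesis rescues it. It does not. Relativizing down by $\emptyset''$, the assertion becomes ``two isomorphic c.e.\ Boolean algebras are computably isomorphic,'' which already fails for \emph{computable} Boolean algebras: by the Goncharov--La~Roche--Remmel characterization, any computable Boolean algebra with infinitely many atoms has two computable copies with no computable isomorphism between them. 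Hence no tree-of-strategies argument over $\emptyset''$ can succeed once you have discarded everything about $A$, $\Ahat$ and the ambient $\E$ except the abstract $\Sigma^0_3$ presentations of the two quotients; and the non-effective ``guide'' $\bar\Phi$ adds nothing after that reduction, since any isomorphic pair comes with such a guide.

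Your Route~2 is the correct direction and is essentially how the cited paper proceeds. The $\Delta^0_3$ isomorphism is not manufactured from the quotients in isolation but by running the $\Delta^0_3$-automorphism/extension machinery for $\E$ itself: the $\D$-hhsimplicity hypothesis is precisely what is needed to verify the combinatorial hypotheses of that machinery on the $A$-side and the $\Ahat$-side, and the output is a new map on $\E$ whose induced action on $\E_{\D(A)}$ is $\Delta^0_3$. The crucial point is that the dynamic $\E$-structure surrounding $\D(A)$ --- enumerations, the generation of $\D(A)$ by sets disjoint from $A$, splitting properties --- is what drives the complexity down, and your Route~1 jettisons exactly that structure.
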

 
So it is an arithmetic question to ask if the orbit of a
$\mathcal{D}$-maximal set contains a complete set.  But the question
remains does the orbit of every $\mathcal{D}$-maximal set contain a
complete set?  It was hoped that the structural properties of
$\mathcal{D}$-maximal sets would be sufficient to allow us to answer
this question.

Cholak, Gerdes, and Lange \cite{pub2} have completed a classification
of all $\D$-maximal sets. The idea is to look at how $\D(A)$ is
generated. For example, for a hemimaximal set $A_0$, $\D(A_0)$ is
generated by $A_1$, where $A_0 \sqcup A_1$ is maximal. There are ten
different ways that $\D(A)$ can be generated.  Seven were previously
known and all these orbits contain complete and incomplete sets.  Work
from \citet{MR1264963} shows that these seven types are not enough to
provide a characterization of all $\mathcal{D}$-maximal sets.  Cholak,
Gerdes, and Lange construct three more types and show that these ten
types provide a characterization of all $\mathcal{D}$-maximal sets.
We have constructed three new types of $\D$-maximal sets; for example,
a $\D$-maximal set where $\D(A)$ is generated by infinitely many not
disjoint c.e\ sets.  We show these three types plus another split into
infinitely many different orbits.  We can build examples of these sets
which are incomplete or complete. But, it is open if each such orbit
contains a complete set. So, the structural properties of
$\mathcal{D}$-maximal sets was not enough to determine if each
$\mathcal{D}$-maximal set is automorphic to a complete set.

It is possible that one could provide a similar characterization of
the $\mathcal{D}$-hhsimple sets. One should fix a $\Sigma^0_3$ Boolean
algebra, $\mathcal{B}$, and characterize the $\mathcal{D}$-hhsimple
sets, $A$, where $\E_{\mathcal{D}(A)} \cong \mathcal{B}$.  It would be
surprising if, for some $\mathcal{B}$, the characterization would
allow us to determine if every orbit of these sets contains a complete
set.

\section{Lowness}

Following his result that the maximal sets form an orbit,
Soare\cite{Soare:82} showed that the low sets resemble computable
sets.  A set $A$ is low$_n$ iff $\mathbf{0}^{(n)} \equiv_T
A^{(n)}$. We know that noncomputable low sets cannot have a computable
set in their orbit, so, the best that Soare was able to do is the
following:

\begin{definition}
  $\mathcal{L}(A)$ are the c.e.\ supersets of $A$ under inclusion.
  $\mathcal{F}$ is the filter of finite sets.  $\mathcal{L}^*(A)$ is
  $\mathcal{L}(A)$ modulo $\mathcal{F}$.
\end{definition}

\begin{theorem}[\citet{Soare:82}]
  If $A$ is low then $\mathcal{L}^*(A) \approx
  \mathcal{L}^*(\emptyset)$.
\end{theorem}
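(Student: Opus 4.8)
The plan is to build, uniformly and effectively, a lattice isomorphism
$\Phi\colon\mathcal{E}^*=\mathcal{L}^*(\emptyset)\to\mathcal{L}^*(A)$. Since every
member of $\mathcal{L}(A)$ contains $A$, the image $\Phi(W_e)=\What_e$ of a c.e.\
set $W_e$ will be a c.e.\ set with $A\subseteq\What_e$, and it is the $\What_e$
that we construct. To get a bijection I would run a back-and-forth: alongside the
forward requirements $R_e$ (``$\What_e\supseteq A$ is the image of $W_e$'') I would
impose surjectivity requirements $S_e$ (``if the $e$th c.e.\ set $V_e$ contains
$A$, then $V_e=^*\What_{g(e)}$ for one of the fresh indices $g(e)$ whose sets we
are also constructing''). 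Both families are of the same type --- in each case one
side is built freely while the other is given --- so $S_e$ is handled by the same
state-matching machinery as $R_e$; the real content lies in the $R_e$ together
with the demand that $\Phi$ respect $\subseteq$, $\cup$ and $\cap$ modulo finite.

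The engine is Soare's $e$-state matching, applied exactly as in the automorphism
method. At stage $s$ a ball $x$ carries a \emph{left state}, the finite set of
$i\le s$ with $x\in W_{i,s}$, and a \emph{right state}, the set of $i\le s$ with
$x\in\What_{i,s}$; these are linked by a fixed correspondence between left states
and right states. When a ball's left state grows we move it, on the right, to the
matching right state; when a ball enters $A$ we are forced to dump it into every
$\What_i$, so its right state jumps to the top. The requirements then reduce to
cardinality conditions: for each final left state $\sigma$, the set of balls with
final left state $\sigma$ and the set of balls whose final right state is the
partner of $\sigma$ must be simultaneously finite or simultaneously infinite, and
equinumerous in the finite case. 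This is maintained by the usual residence
bookkeeping --- keeping a reservoir of balls in minimal right states, to be
promoted as needed --- together with controlled dumping. A true-stage argument
then shows every ball reaches a final state, so that $\What_e$ is, modulo finite,
the union of the right states matching the left states realized in $W_e$; from
this it is routine that $\Phi$ preserves $\subseteq,\cup,\cap$, is injective on
$\mathcal{E}^*$, and, via the $S_e$, surjective onto $\mathcal{L}^*(A)$.

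The step that genuinely uses the hypothesis, and the one I expect to be the main
obstacle, is deciding during the construction which state classes are infinite.
Whether a given right state should still be fed, and how large a reserve to keep
in each small state, turns on assertions of the form ``infinitely many balls
reach final left state $\sigma$ and never enter $A$,'' which are $\Pi^0_2(A)$
(equivalently, $\Sigma^0_3$ index-set information). For an arbitrary c.e.\ set
$A$ such predicates can be as complex as $\Pi^0_3$, so the bare construction
needs a $\emptyset'''$ oracle and the matching cannot be pushed through with
$\emptyset''$; this is exactly where the general machinery stalls. But $A$ low
means $A'\equiv_T\emptyset'$, so $\Pi^0_2(A)=\Pi^0_2$ (and $\Sigma^0_{n+1}(A)=
\Sigma^0_{n+1}$ generally), and these questions become $\emptyset''$-decidable in
the limit. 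I would therefore carry out the whole construction inside a
$\Delta^0_3$ guessing framework --- a tree of strategies with movable markers ---
so that along the true path every guess ``state $\sigma$ is infinite off $A$'' is
eventually correct; with the guesses right, the cardinality conditions are met.

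Finally I would verify the bookkeeping that makes $\Phi$ an isomorphism: a finite
change to $W_e$ forces only a finite change in $\What_e$, so $\Phi$ is well
defined on $\mathcal{E}^*$; it is a lattice homomorphism onto $\mathcal{L}^*(A)$
with trivial kernel; hence it is an isomorphism, and $\mathcal{L}^*(A)\approx
\mathcal{L}^*(\emptyset)$.
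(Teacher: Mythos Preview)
The paper is a survey and does not give its own proof of this theorem; it cites \citet{Soare:82} and only sketches the ingredients through its discussion of Maass's extension. What the paper does isolate as the crux matches what you identified: one needs to know which c.e.\ sets are \emph{well-resided outside $A$}, i.e., have $W\cap\overline{A}$ infinite, and the hypothesis is what makes this a $\Pi^0_2$ question that a $\Delta^0_3$ construction can guess correctly. Your plan --- Soare's $e$-state matching run as a back-and-forth on a $\Delta^0_3$ tree of strategies, with lowness used exactly to decide which state classes are infinite off $A$ --- is essentially the original argument.

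Two small remarks. First, the paper notes that Soare actually proved the result under the weaker hypothesis that $\overline{A}$ is semilow (i.e., $\{i:W_i\cap\overline{A}\neq\emptyset\}\leq_T\mathbf{0}'$), and Maass pushed it to semilow$_{1.5}$; your reduction via $A'\equiv_T\emptyset'$ is correct but coarser than what the construction really needs. Second, your invocation of a ``true-stage argument'' to see that every ball reaches a final state is not needed here and is slightly misplaced: in Soare's construction finality of states is immediate from the finiteness of the $e$-state lattice at each level, and the paper reserves the true-stages technique for Lachlan's low$_2$ maximal-superset result, which is a genuinely harder setting.
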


In 1990, Soare conjectured that this can be improved to low$_2$. Since
then there have been a number of related results but this conjecture
remains open.  To move forward some definitions are needed:

\begin{definition}
  $A$ is \emph{semilow} iff $\{ i | W_i \cap A \neq \emptyset \}$ is
  computable from $\mathbf{0'}$.  $A$ is \emph{semilow$_{1.5}$} iff
  $\{ i | W_i \cap A \text{ is finite}\} \leq_1 \mathbf{0''}$.  $A$ is
  \emph{semilow$_2$} iff $\{ i | W_i \cap A \text{ is finite}\}$ is
  computable from $\mathbf{0''}$.
\end{definition}

Semilow implies semilow$_{1.5}$ implies semilow$_2$, if $A$ is low
then $\overline{A}$ is semilow, and low$_2$ implies semilow$_2$
(details can be found in \citet{Maass:83} and
\citet{mr95f:03064}). \citet{Soare:82} actually showed that if
$\overline{A}$ is semilow then $\mathcal{L}^*(A) \approx
\mathcal{L}^*(\emptyset)$.  \citet{Maass:83} improved this to when
$\overline{A}$ is semilow$_{1.5}$.

In Maass's proof semilow$_{1.5}$ness is used in two ways:  A c.e.\
set, $W$, is \emph{well-resided outside $A$} iff $W \cap \overline{A}$
is infinite.  Semilow$_{1.5}$ makes determining which sets are
well-resided outside $A$ a $\Pi^0_2$ question.  The second use of
semilow$_{1.5}$ was to capture finitely many elements of $W \cap
\overline{A}$.  For that Maass showed that semilow$_{1.5}$ implies the
\emph{outer splitting property}:

\begin{definition}
  $A$ has the \emph{outer splitting property} iff there are computable
  functions $f, h$ such that, for all $e$, $W_e = W_{f(e)} \sqcup
  W_{h(e)}$, $W_{f(e)} \cap \overline{A}$ is finite, and if $W_e \cap
  \overline{A}$ is infinite then $W_{f(e)} \cap \overline{A}$ is
  nonempty.
\end{definition}

Cholak used these ideas to show that:

\begin{theorem}[\citet{mr95f:03064})]
  If $A$ has the outer splitting property and $\overline{A}$ is
  semilow$_2$ then $\mathcal{L}^*(A) \approx
  \mathcal{L}^*(\emptyset)$.
\end{theorem}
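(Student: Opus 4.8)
The plan is to adapt Maass's proof of the semilow$_{1.5}$ case \citet{Maass:83}, replacing its two uses of semilow$_{1.5}$ by, respectively, the semilow$_2$ hypothesis and the assumed outer splitting property. Concretely, I would construct an isomorphism $\Phi\colon \L^*(A)\to\L^*(\emptyset)=\E^*$ together with its inverse $\Psi$ by a back-and-forth construction with access to a $\mathbf{0}''$ oracle. Fix effective enumerations $\{U_e\}$ of the c.e.\ supersets of $A$ and $\{V_e\}$ of all c.e.\ sets; build c.e.\ sets $\Phi(U_e)$ (plain c.e.\ sets) and $\Psi(V_e)$ (c.e.\ supersets of $A$), and at stage $s$ work inside the finite sublattices generated by $U_0,\dots,U_n$ on one side and $V_0,\dots,V_n$ on the other, together with the associated notion of the \emph{state} of a ball: for $x\in\overline A$ its state is $\{i\le n : x\in U_i\}$, and dually on the image side. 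As in \citet{Soare:82} and \citet{Maass:83}, the map will be a correct isomorphism modulo finite sets provided that for every state $\nu$ there are infinitely many $x\in\overline A$ with final state $\nu$ if and only if the matching state is realized by infinitely many image balls; the whole construction is organized around achieving this ``matching of final states''.

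The first place the hypotheses enter is in deciding which states must be matched: to do the bookkeeping I need, for each relevant c.e.\ set $W$, the answer to ``is $W$ well-resided outside $A$'', i.e.\ whether $W\cap\overline A$ is infinite. Because $\overline A$ is semilow$_2$, the set $\{i : W_i\cap\overline A\text{ is finite}\}$ is computable from $\mathbf{0}''$, so the $\mathbf{0}''$ oracle driving the construction settles every such question. The one genuine loss relative to Maass is that this predicate is now only $\Delta^0_3$ rather than $\Pi^0_2$, so a node of the strategy tree that must react to ``$W$ is well-resided'' can no longer simply carry a single infinitary outcome; it must also guess, and periodically recompute, the $\mathbf{0}''$-computation witnessing the answer, so that all the guesses stabilise along the true path. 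This is the standard device for using a $\Delta^0_3$ fact inside an otherwise $\mathbf{0}''$-driven construction, and it is the source of the extra combinatorial overhead.

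The second use of the hypotheses is when the construction decides that some preimage set $\Psi(V_i)$ must receive a ball lying in $\overline A$, in order to realize on the $\L(A)$ side a state that demands ``this ball is outside $A$''. Since $\overline A$ is only co-c.e., I cannot simply name such a ball and be confident it stays outside $A$. Here I invoke the outer splitting property exactly as Maass does: from a c.e.\ $W$ believed to be well-resided outside $A$, iterating the splitting $W_e = W_{f(e)}\sqcup W_{h(e)}$ yields a uniformly c.e.\ reservoir of balls each of which is eventually confirmed to lie in $\overline A$ and from which a fresh element of $W\cap\overline A$ can always be extracted on demand. This lets me ``capture'' the finitely many elements of $\overline A$ that a state requires without ever being forced to enumerate a captured ball into $A$.

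Finally I would verify, along the true path, that $\Phi$ and $\Psi$ are defined on all of $\L^*(A)$ and $\E^*$, preserve $\subseteq$ and commute with $\cup$ and $\cap$ modulo finite sets, and are mutually inverse modulo finite sets; via the state machinery each of these reduces to the final-state matching already arranged. The main obstacle, and essentially the only thing beyond Maass's argument, is the interaction between the second and first steps: because well-residedness is $\Delta^0_3$ rather than $\Pi^0_2$, a capturing action can be injured when a higher-priority node discovers its well-residedness guess was wrong, so I must show that along the true path each node is injured only finitely often and that every capture it initiates is eventually completed — that is, that the additional layer of guessing does not disturb the convergence of the finite-injury extension apparatus inherited from \citet{Maass:83} and \citet{Soare:82}.
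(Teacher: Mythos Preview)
Your proposal is correct and matches the approach the paper indicates: the paper is a survey and does not give a full proof here, but it explicitly isolates Maass's two uses of semilow$_{1.5}$ (deciding well-residedness, and obtaining the outer splitting property) and then says ``Cholak used these ideas'' to prove the theorem---precisely the decomposition you carry out, replacing the first use by the semilow$_2$ hypothesis (now $\Delta^0_3$ rather than $\Pi^0_2$) and the second by the assumed outer splitting property. The extra layer of $\Delta^0_3$ guessing you flag is exactly the added overhead over Maass's argument, and your identification of it as the only new obstacle is accurate.
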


It is known that there is a low$_2$ set which does not have the outer
splitting property, see \citet*[Theorem~4.6]{Downey:2013wq}.  So to
prove that if $A$ is low$_2$ then $\mathcal{L}^*(A) \approx
\mathcal{L}^*(\emptyset)$ will need a different technique.  However,
\citet{Lachlan:68} showed that every low$_2$ set has a maximal
superset using the technique of \emph{true stages}.  Perhaps the
true stages technique can be used to show Soare's conjecture.

Recently there has been a result of Epstein. 

 \begin{theorem}[\citet{epstein:10} and \cite{MR3003266}]
   There is a properly low$_2$ degree $\mathbf{d}$ such that if $A
   \leq_T \mathbf{d}$ then $A$ is automorphic to a low set.
 \end{theorem}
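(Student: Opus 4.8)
The plan is to build a c.e.\ set $D$ of (Turing) degree $\mathbf{d}$ by an infinite-injury, tree-of-strategies construction meeting three kinds of requirements: a family forcing $D''\leq_T\mathbf{0}''$, handled in the standard way by guessing along the tree at the relevant $\Sigma^0_2$ facts, so that $D$ is low$_2$; a diagonalization family ensuring $D'\neq\Psi_e^{\mathbf{0}'}$ for all $e$, so that $\mathbf{d}$ is \emph{properly} low$_2$; and a third, decisive family that tames every c.e.\ $A\leq_T D$ so that it can be carried by an automorphism onto a low set.  Since there are infinitely many such $A$ but only one $D$, the third family cannot itself build the automorphisms: the real content separates into an \emph{automorphism theorem} --- a sufficient condition on a single c.e.\ set guaranteeing it is automorphic to a low set --- together with a construction of $D$ forcing that condition on all of its predecessors.  (One could equally package the two together, using properties of $D$ directly inside the automorphism construction; the analysis is the same.)

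First I would isolate the automorphism theorem, in roughly this form: if $\overline{A}$ is semilow$_{1.5}$ --- or, more generally, satisfies a hypothesis sufficient to run the extension theorem, namely that ``$W_e$ is well-resided outside $A$'' is a $\Pi^0_2$ predicate together with the outer splitting property --- and the enumeration of $A$ is suitably paced, then $A$ is automorphic to a low set $\widehat{A}$.  The automorphism is produced by the Soare--Maass--Cholak extension-theorem machinery underlying the theorem of Cholak on the outer splitting property stated above: one builds an isomorphism $\Phi$ of $\mathcal{E}$ onto a copy $\widehat{\mathcal{E}}$ with $\Phi(A)=\widehat{A}$, using the semilowness data to handle the well-resided-outside sets and the outer splitting property to capture finitely many witnesses at a time.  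The genuinely new ingredient is a family of lowness requirements for $\widehat{A}$: to compute $\widehat{A}'$ from $\mathbf{0}'$ one imposes restraint so that, once the $e$-th $\widehat{A}$-jump computation converges, it is preserved, and one must verify that the inflow of balls into $\widehat{A}$ prescribed by the automorphism dynamics is $\mathbf{0}'$-predictable along the true path.  This is exactly where ``$A\leq_T D$ with $D$ low$_2$'' enters: it bounds the complexity of the bookkeeping about which $W_e$ are well-resided outside $A$, and hence the injury inflicted on the $\widehat{A}$-lowness requirements.

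Next I would build $D$.  The priority construction must simultaneously (a) guess the $\Sigma^0_2$ facts keeping $D$ low$_2$; (b) diagonalize against each potential reduction computing $D'$ from $\mathbf{0}'$, so that $D$ is not low; and (c) for each index $i$ viewed as a putative reduction witnessing some $A=\Psi_i^D\leq_T D$, act so that $\overline{A}$ comes out semilow$_{1.5}$ --- or at least meets the hypothesis of the automorphism theorem.  Item (c) is enforced by a permitting/coding discipline: $D$ controls the rate at which an element can be certified into $A$ or into $\overline{A}$, so that ``$W_k\cap\overline{A}$ is finite'' is pinned down at the $\Sigma^0_2$ level uniformly in $k$.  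One then checks that $D$ is low$_2$, not low, and such that every $A\leq_T D$ meets the hypothesis of the automorphism theorem, and hence is automorphic to a low set.

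The step I expect to be the main obstacle is reconciling (b) with (c): the non-lowness requirements want to enumerate elements into $D$ at moments unpredictable from $\mathbf{0}'$, while forcing every $A\leq_T D$ to be ``automorphically low-able'' wants the enumeration of everything $D$ computes to be tame.  The resolution must be that the diagonalization is absorbed within the $\Sigma^0_2$ slack available to a low$_2$ construction, while the taming of the predecessors of $D$ is engineered to be insensitive to precisely that slack --- a delicate calibration, and presumably the reason $\mathbf{d}$ can be made properly low$_2$ but (necessarily) not low.  A secondary difficulty is interleaving the higher-level extension-theorem machinery for $\Phi$ with the $\mathbf{0}'$-level restraint needed for the lowness of $\widehat{A}$; this calls for a careful true-stages analysis ensuring that each $\widehat{A}$-lowness requirement is injured only finitely often along the true path.
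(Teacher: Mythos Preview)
The paper is a survey and does not give a proof of this theorem; it simply cites Epstein's thesis and paper. So there is no ``paper's own proof'' to compare against, and your plan must be judged on its own merits.

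Your overall architecture --- isolate an automorphism criterion, then build $D$ forcing that criterion on every $A\leq_T D$ --- is the right shape, and your identification of the tension between the non-lowness diagonalization and the taming of predecessors is on target. But there is a concrete obstruction to the specific criterion you propose. You want step~(c) to ensure that $\overline{A}$ is semilow$_{1.5}$ (or at least has the outer splitting property) for \emph{every} c.e.\ $A\leq_T D$. This is impossible once $D$ is nonlow: the paper itself cites \citet*[Theorem~4.5]{Downey:2013wq} to the effect that every nonlow c.e.\ degree contains a c.e.\ set $A$ with $\overline{A}$ not semilow$_{1.5}$. Since $\mathbf{d}$ is required to be properly low$_2$, hence nonlow, some $A\leq_T D$ will fail your hypothesis. (Indeed, the paper uses exactly this observation in deriving the corollary that speedability is not definable.) The outer splitting property is likewise suspect as a universal hypothesis below $D$: the paper notes there is a low$_2$ set lacking it.

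So the missing idea is the correct intermediate hypothesis. Epstein's actual argument does not route through semilow$_{1.5}$ for $\overline{A}$; rather, the construction of $D$ equips each $A\leq_T D$ with enough $\mathbf{0}''$-computable control over the enumeration (via the low$_2$ness of $D$ and a tailored ``true stages''/restraint discipline) that the Cholak--Harrington--Soare automorphism machinery can be run with lowness requirements on the target $\widehat{A}$, even though $\overline{A}$ need not be semilow$_{1.5}$. In other words, the automorphism theorem you need has a hypothesis strictly weaker than those you list, calibrated to what a properly low$_2$ $D$ can actually impose on all of its predecessors. Identifying and verifying that hypothesis is the real work; your plan as written would stall at step~(c).
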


 Epstein's result shows that there is no collection of c.e.\ sets
 which is closed under automorphisms and contains at least one set of
 every nonlow degree. Related results were discussed in
 \citet{mr2001k:03085}. 

 This theorem does have a nice yet unmentioned corollary: The
 collection of all sets $A$ such that $\overline{A}$ is semilow (these
 sets are called \emph{speedable}) is not definable.  By
 \citet*[Theorem~4.5]{Downey:2013wq}, every nonlow c.e. degree
 contains a set $A$ such that $\overline{A}$ is not semilow$_{1.5}$
 and hence not semilow. So there is such a set $A$ in $\mathbf{d}$.
 $A$ is automorphic to a low set $\Ahat$.  Since $\Ahat$ is low,
 $\overline{\Ahat}$ is semilow.

 Esptein's result leads us wonder if the above results can be improved
 as follows:

 \begin{conjecture}[Soare]
   Every semilow set is (effectively) automorphic to a low set.
 \end{conjecture}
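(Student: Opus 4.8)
The plan is to prove the conjecture by a direct automorphism construction in the spirit of Soare's $\mathcal{E}$-extension theorem, as refined in the ``every c.e.\ set is automorphic to a high set'' arguments of Cholak and of Harrington and Soare, but with the coding module that forces the image to be high replaced by a lowness module for the image; this would simultaneously upgrade Maass's superset-lattice theorem to a full-orbit statement and generalize Epstein's result by replacing its degree hypothesis with the semilowness hypothesis. Given a c.e.\ set $A$ whose complement is semilow, together with an index for $A$ and a $\mathbf{0'}$-index for a function witnessing semilowness, the goal is to build a low c.e.\ set $\Ahat$ and a $\Delta^0_3$ automorphism $\Phi$ of $\mathcal{E}$ with $\Phi(A)=\Ahat$. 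Since $\overline{A}$ semilow implies $\overline{A}$ semilow$_{1.5}$, all of Maass's apparatus is available: $A$ has the outer splitting property, ``$W_e\cap\overline{A}$ is infinite'' is $\Pi^0_2$, and $\mathcal{L}^*(A)\approx\mathcal{L}^*(\emptyset)$.

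First I would run the usual back-and-forth: alongside $\Ahat$, enumerate for every $e$ the images $\Phi(W_e)$ and preimages $\Phi^{-1}(W_e)$, maintaining the invariants of the extension theorem so that the induced map on $\mathcal{E}$-states is a lattice isomorphism fixing $\emptyset$ and $\omega$ and sending $A$ to $\Ahat$. The delicate point — that each image and preimage sits correctly with respect to $\Ahat$ — would be handled exactly as in Maass's proof that $\overline{A}$ semilow$_{1.5}$ gives $\mathcal{L}^*(A)\approx\mathcal{L}^*(\emptyset)$: the $\mathbf{0''}$ oracle running the construction uses the $\Pi^0_2$-ness of ``resides outside'' to decide which sets must keep points permanently out of $\Ahat$, and the outer splitting property is used to split each $W_e$ into a piece essentially staying outside $A$ and a piece that may be dumped into $A$, with the splittings mirrored on the image side. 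Threaded through this is a priority list of lowness requirements $N_e$: decide whether $\{e\}^{\Ahat}(e){\downarrow}$ and, if so, preserve the computation.

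The heart of the proof, and the step I expect to be the main obstacle, is the conflict between the automorphism requirements and the $N_e$. In an extension-theorem construction one does not choose when a ball enters $\Ahat$ — it must enter whenever its match enters $A$, to preserve states — but one does choose which point outside $\Ahat$, among those in the right state, serves as the match. To keep $\Ahat$ low one wants always to take that point above the use of every higher-priority preserved computation, so one must establish (i) that the reservoir of correctly-stated points outside $\Ahat$ is always large enough to permit this, which is precisely where the full strength of ``$\overline{A}$ semilow'' has to be exploited well beyond the $\mathcal{L}^*$-level use of it by Maass, and (ii) that each $N_e$ is injured only finitely often and that a $\mathbf{0'}$ oracle can locate the stage after which it is never injured, yielding $\Ahat'\leT\mathbf{0'}$. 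Concretely, one wants semilowness of $\overline{A}$ to induce a $\Delta^0_2$ true-stage-like structure on the enumeration of $\Ahat$, making the flow of balls into $\Ahat$ below any fixed use $\mathbf{0'}$-approximable.

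Finally, the parenthetical ``effectively'' requires checking that the whole construction — the extension-theorem bookkeeping, Maass's module, and the lowness module — is uniform in the index for $A$ and the $\mathbf{0'}$-index for its semilowness witness, so that $\Ahat$ and the $\Delta^0_3$ automorphism $\Phi$ are produced by a single effective procedure. Given the known uniformity of the extension theorem and of Maass's construction, this should reduce to uniformity of the lowness module, which I expect to be routine once the obstacle above is resolved.
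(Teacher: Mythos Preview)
The statement you are attempting is listed in the paper as a \emph{conjecture}; the paper offers no proof, not even a sketch, so there is nothing on the paper's side to compare your argument against. What you have written is therefore not being measured against a known proof but against the open problem itself.

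Read on those terms, your proposal is a reasonable research outline but not a proof, and you are candid about this: you isolate the conflict between the automorphism (state-matching) requirements and the lowness restraints $N_e$ as ``the main obstacle,'' and you describe what one \emph{wants} semilowness of $\overline{A}$ to buy --- a $\Delta^0_2$ true-stage structure on the flow into $\Ahat$ so that restraints below any fixed use settle $\mathbf{0'}$-computably --- without actually establishing that it does. That is exactly the gap. In the extension-theorem framework the choice of which correctly-stated element to enumerate into $\Ahat$ is constrained by the back-and-forth bookkeeping, and it is not at all clear that semilowness of $\overline{A}$ (a hypothesis on the \emph{source} side) yields enough freedom on the \emph{target} side to always pick matches above the current restraint while still meeting all state requirements. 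Your appeal to Epstein's theorem does not bridge this: her argument exploits a degree-theoretic hypothesis on the target side, constructing the low$_2$ degree $\mathbf{d}$ simultaneously, whereas here the hypothesis is a lattice-theoretic property of the given set and the low target must be produced from scratch. Until you supply a concrete mechanism showing that semilowness forces the needed reservoir of large correctly-stated points (or some equivalent device), the proposal remains a plan rather than a proof --- which is consistent with the paper's own assessment that the statement is open.
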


 \begin{conjecture}[Cholak and Epstein]
   Every set $A$ such that $A$ has the outer splitting property and
   $\overline{A}$ is semilow$_2$ is automorphic to a low$_2$ set.
 \end{conjecture}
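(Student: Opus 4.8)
The natural plan is a full automorphism construction: build simultaneously an automorphism $\Phi$ of $\E$ and the c.e.\ set $\Ahat=\Phi(A)$, forcing $\Ahat$ to be low$_2$. The skeleton is the state-based back-and-forth used to build $\E$-automorphisms (as in Soare's maximal set automorphism \cite{Soare:74} and its descendants), run with $A$ and $\Ahat$ among the finitely many sets whose current membership defines the \emph{state} of a ball; the two hypotheses feed in on the $A$-side exactly as they do in Cholak's proof \cite{mr95f:03064} that $\L^*(A)\approx\L^*(\emptyset)$. That is, semilow$_2$-ness of $\overline{A}$ makes ``$W_e$ is well-resided outside $A$'' a $\emptyset''$-question -- needed so that the construction can decide which $W_e$ must be made to send infinitely many balls into $\overline{\Ahat}$ -- while the outer splitting property supplies the computable $f,h$ used to capture finitely many members of $W_e\cap\overline{A}$ on demand. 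Such a construction naturally lives at the $\Delta^0_4$ (equivalently $\emptyset'''$-oracle) level, and $\Phi$ is extracted afterward by the usual argument that the limiting state data determine an automorphism of $\E$.

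Two things are new relative to what is needed for the $\L^*$-theorem. First, this is a full $\E$-automorphism rather than merely a map on $\L(A)$, so the $\Ahat$-side must be as regular as the $A$-side: one should build $\Ahat$ so that $\overline{\Ahat}$ is again semilow$_2$ (which follows from low$_2$-ness, since low$_2$ implies semilow$_2$) and has the outer splitting property, so that the back-and-forth closes symmetrically. Second, and essentially, there are lowness requirements $\N_e$ whose combined effect must be that $\emptyset''$ recovers $\Ahat''$; by the standard analysis this reduces to deciding, from $\emptyset''$, whether for a given $e$ there are infinitely many $e$-expansionary stages for the totality of $\varphi_e^{\Ahat}$. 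I would meet these by the Lachlan/Robinson ``$\Pi_2$ true-stages'' method -- the same circle of ideas by which \cite{Lachlan:68} produces maximal supersets of low$_2$ sets -- with, for each $e$, a node on a coarser ($\Pi_2$-level) tree guessing whether infinitely many $e$-expansionary stages occur, and, below the correct outcome, restraining small numbers from entering $\Ahat$ to preserve the relevant computations. Reading these outcomes off along the true path is what lets $\emptyset''$ compute $\Ahat''$.

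The main obstacle -- and the reason this remains a conjecture -- is the clash between these restraints and the automorphism machine, since the state-repair (``dumping'') mechanism is precisely a device for \emph{forcing} balls into $\Ahat$ and a priori can injure a given $\N_e$-restraint infinitely often. What is needed is a priority assignment under which, for each $e$, only finitely many dumps ever drop a ball below the (current) $\N_e$-restraint, together with a policy of always choosing a \emph{large} ball of whatever state a repair requires: since each target state has a cofinite reservoir of eligible balls, one should be able to keep the dumped ball above the restraint -- the mirror image of the witness-steering by which the analogous machinery in \cite{mr95f:03064} and \cite{Harrington.Soare:96} makes automorphic images \emph{high}, now deployed toward lowness instead. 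Establishing that the automorphism can be kept alive while all the low$_2$ restraints are honored is the crux; granting it, the verifications that $\Ahat$ is low$_2$ and that $\Phi$ is an $\E$-automorphism are routine.
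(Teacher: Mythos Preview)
There is nothing to compare against: the paper states this as an open \emph{conjecture} and gives no proof. The only hint the paper offers is the sentence immediately preceding the conjectures, that Lachlan's true-stages technique for producing maximal supersets of low$_2$ sets might be the missing ingredient, and the remark that Cholak and Epstein are currently working on it. Your outline is consistent with that hint---you invoke exactly the $\Pi_2$ true-stages idea for the low$_2$ requirements and you feed in the two hypotheses on the $A$-side just as in \cite{mr95f:03064}. You also correctly identify the crux: reconciling the low$_2$ restraints on $\Ahat$ with the dumping mechanism of the automorphism machinery. That tension is precisely why the statement is listed as a conjecture rather than a theorem, and your proposal does not resolve it; the paragraph beginning ``What is needed is a priority assignment\ldots'' is a hope, not an argument. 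So your write-up is a reasonable research plan, but it is not a proof, and the paper does not supply one either.
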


 Cholak and Epstein are currently working on a proof of the latter
 conjecture and some related results.  Hopefully, a draft will be
 available soon.

\bibliographystyle{plainnat} 

\bibliography{incomputable}

\end{document}